\theoremstyle{plain}
\newtheorem*{theorem-nonumber}{Theorem}
\newtheorem{theorem}{Theorem}[section]
\newtheorem{proposition}[theorem]{Proposition}
\newtheorem{lemma}[theorem]{Lemma}
\newtheorem{definition}[theorem]{Definition}
\newtheorem{corollary}[theorem]{Corollary}
\newtheorem{example}[theorem]{Example}
\DeclareMathOperator{\Ext}{Ext} \DeclareMathOperator{\Hom}{Hom} \DeclareMathOperator{\Aut}{Aut}
  \DeclareMathOperator{\coker}{coker}
\def\Q{\mathbb Q}
\newcommand\sO{{\mathcal O}}
\newcommand\sQ{{\mathcal Q}}
\def\pee#1{\hbox{$ {\mathbf P}^{#1}$}}
  \def\peen{\hbox{$ {\mathbf  P}^n$}}
  \def \tab#1{\kern #1 truein}
  \def\OP#1{\hbox{${\cal O}_{{\mathbf P}^{#1}}$}}
  \def\Q{\hbox{${\cal Q}$}}
\begin{document}
  \title{Horrocks Correspondence on a Quadric Surface}
\author{F. Malaspina\thanks{Partially supported by GNSAGA of Indam (Italy)} \ and A.P. Rao
\vspace{6pt}\\
 {\small  Politecnico di Torino}\\
{\small\it  Corso Duca degli Abruzzi 24, 10129 Torino, Italy}\\
{\small\it e-mail: malaspina@calvino.polito.it}\\
\vspace{6pt}\\
{\small   University of Missouri-St. Louis}\\
 {\small\it  St Louis, MO, 63121, United States}\\
{\small\it e-mail: raoa@umsl.edu}} \maketitle \def\thefootnote{} \footnote{\noindent Mathematics Subject Classification 2010:
14F05, 14J60.
\\  Keywords: vector bundles, cohomology modules, smooth quadric surface}

\begin{abstract}We extend the  Horrocks  correspondence between vector bundles and cohomology modules on the projective plane
to the product of two projective lines. We introduce a set of invariants for a vector bundle on the product of two projective
lines, which includes the first cohomology module of the bundle, and prove that there is a one to one correspondence between
these sets of invariants and isomorphism classes of vector bundles without line bundle summands.
\end{abstract}

\begin{section}*{Introduction}
We denote by $\peen$ the $n$-dimensional projective space over a field $k$. For a vector bundle $E$ on $\peen$, its
intermediate cohomology modules are $H_*^i(E)=\bigoplus_{d\in \mathbb Z} H^i(E(d))$ ($0<i<n$), which are graded modules of
finite length over the polynomial ring in $n+1$ variables. Horrocks in \cite{Ho} established that a vector bundle on $\peen$
is determined up to isomorphism and up to a sum of line bundles ({\it i.e.} up to stable equivalence) by its collection of
intermediate cohomology modules and also a certain collection of extension classes involving these modules. In the simplest
case, on $\pee2$, this Horrocks correspondence states that stable equivalence classes of vector bundles are in one-to-one
correspondence with isomorphism classes of graded modules $M$ of finite length via the map $E \mapsto M = H^1_*(E)$. On
$\pee3$, each triple $(M_1, M_2, \rho)$, where $M_1, M_2$ are graded modules of finite length and $\rho$ is an element in
$\Ext^2(M_1^\vee, M_2^\vee)$,  corresponds to the stable equivalence class of a vector bundle $E$ with $M_1, M_2$ as its
intermediate cohomology modules.

This has led to attempts to study vector bundles on $\peen$ with restrictions placed on the intermediate cohomology modules.
Horrocks himself proved his splitting criterion ({\it loc. cit.}) which states that when all the intermediate cohomology
modules are zero, the bundle is a sum of line bundles. In general, for an $n$-dimensional projective variety $(X, \sO(1))$, a
vector bundle $E$ on $X$ with all intermediate cohomology modules $H^i_*(E), 0 < i < n$ equal to zero is called an ACM
(Arithmetically Cohen-Macaulay) bundle. Thus, on $\peen$, all ACM bundles are sums of line bundles of the form $\sO(a)$.
Refinements of this splitting criterion include the syzygy theorem for vector bundles on $\peen$ (\cite{Ein},\cite{E-G}) and
other splitting results for bundles with low ranks \cite{KPR}. Several papers study the restrictions on cohomology modules of
rank two bundles on $\pee3$ (\cite{Bur},\cite{Val},\cite{R}).

Horrocks' splitting criterion fails more generally. A subvariety  $(X, \sO(1))$ in projective space is called an
arithmetically Cohen-Macaulay variety if the embedding is projectively normal and $\sO$ itself is an ACM bundle. A smooth ACM
subvariety $X$ which is not a linear subspace will always have ACM vector bundles that are not equal to sums of line bundles
of the form $\sO(a)$ (see \cite{B-G-S} for smooth hypersurfaces in $\peen$ and \cite{KRR3} Proposition 5 for an argument that
works for any smooth ACM variety). On the $n$-dimensional quadric hypersurface $\Q_n$, the spinor bundles (see \cite{o1}) are
such ACM bundles and Kn\"{o}rrer's theorem \cite{Kn} states that all the ACM bundles on $\Q_n$ are direct sums of line
bundles and twists of spinor bundles. ACM bundles on other varieties have been studied by many authors, in particular on
hypersufaces (\cite{B-G-S},\cite{CH1},\cite{CH2},\cite{cf},\cite{CM1},\cite{CM2},\cite{CM3},\cite{fa2},\cite{KRR1},\cite{KRR2},\cite{Rav}),
on complete intersections \cite{BR},
on a few Fano threefolds (\cite{ac},\cite{af},\cite{bf1},\cite{fa1}) and on Grassmannians of lines (\cite{ag},\cite{cm00}).
\par
Ottaviani \cite{o2} has generalized the Horrocks splitting criterion to  quadrics and to Grassmannians. In the last few
years, extensions of the splitting criteria to other varieties have been made, as well as the cohomological characterization
of certain types of vector bundles on these varieties (\cite{am},\cite{bk},\cite{bm1},\cite{bm2},\cite{cm0}).
These results represent special cases
of what should be a Horrocks correspondence on other varieties. The aim of this paper is to obtain a Horrocks correspondence
on a product of two projective lines. In other words, we find cohomological invariants that determine the isomorphism class
of a vector bundle on the product of two projective lines.

Let $\Q$ be the smooth quadric in $\pee3$ defined by the equation $x_0x_3-x_1x_2$. Let $\{s,t\}, \{u,v\}$ be coordinates on
the factors of $\pee1 \times \pee1$. Then $\Q$ is identified with  $\pee1 \times \pee1$, with $x_0,x_1,x_2,x_3$ corresponding
to  $su,sv,tu,tv$. Let $\Sigma_i, i=1,2,$ be the pull back of $\OP 1(1)$ under the two projections to $\pee1$. These are the
spinor (line) bundles on $\Q$.

Let $E$ be a vector bundle on $\Q$ without any ACM summand, and let $M=M(E)$ denote $\bigoplus _{\nu\in \mathbb Z}H^1(\Q,
E(\nu))$, which is a graded module of finite length over the homogeneous coordinate ring $S(\Q)$. A free minimal presentation
of $M$ as an $S(\Q)$-module, when sheafified, yields an exact sequence of vector bundles on $\Q$, $0 \to F_M \to L_1\to
L_0\to 0$, where the $L_i$ are free and $H^1_*(F_M)=M$. Two auxiliary finite length $S(\Q)$-modules  $M_{\Sigma_i}=
H^1_*(F_M\otimes \Sigma_i)$ can hence be defined. We will show that $E$ determines a graded vector subspace $W$ inside the
$S(\Q)$-module $M_{\Sigma_1}$, where this subspace is annihilated by multiplication by $u,v$. Such a subspace $W$ will be
called a subspace of $\Sigma_2$-socle elements in $M_{\Sigma_1}$. Likewise, $E$ will determine a subspace $V$ in
$M_{\Sigma_2}$ consisting of $\Sigma_1$-socle elements.

Given the collection of such data $\{M, W\subset M_{\Sigma_1}, V\subset M_{\Sigma_2}\}$, where $M$ is a graded $S(\Q)$-module
of finite length, and $W, V$ are as described above, an isomorphism $M \to M'$ carries data $(M,W,V)$ to corresponding data
$(M', W', V')$. Hence we can talk about isomorphism classes of this data. Rather than consider stable equivalence classes of
vector bundles on $\Q$, we will consider isomorphism classes of vector bundles on $\sQ$ without ACM summands.

We prove (\ref{uniqueness}, \ref{existence})

\begin{theorem-nonumber} There is a bijection between isomorphism classes of vector
bundles without ACM summands on $\Q$ and isomorphism classes of triples $(M,W,V)$, where $M$ is a graded $S(\Q)$-module of
finite length, $W$ is a graded vector subspace of $\Sigma_2$-socle elements in $M_{\Sigma_1}$, $V$ is a graded vector
subspace of $\Sigma_1$-socle elements in $M_{\Sigma_2}$.
\end{theorem-nonumber}

We end the paper discussing examples that illustrate the invariants introduced. All extensions of Horrocks' ideas to other
varieties require an understanding of ACM bundles on the variety. In cases when there are only finitely many irreducible ACM
bundles (up to twist) on the variety, one can try to obtain a Horrocks correspondence as we have done here. In a paper in
preparation, we consider the case of higher dimensional quadric hypersurfaces.

\section {The $\gamma$-sequence of a bundle}
Let $\Q$ be the smooth quadric surface in $\pee3$ as described above. The spinor (line) bundles $\Sigma_1, \Sigma_2$ on $\Q$
satisfy the canonical sequences (pulled back from $\pee1$):
\begin{equation}
\begin{array}{ccccccc}
         0 \to \Sigma_1^{-1} \xrightarrow{[-t,s]^\vee} 2 \sO  \xrightarrow{[s,t]} \Sigma_1 \to 0, \\
        0 \to \Sigma_2^{-1} \xrightarrow{[-v,u]^\vee} 2 \sO  \xrightarrow{[u,v]} \Sigma_2 \to 0.
\end{array}
\end{equation} \label{spinor}

$\Sigma_1 \otimes \Sigma_2 = \sO(1)$ where $\sO(1)$ denotes the restriction of the hyperplane bundle of $\pee3$ to $\sQ$. We
will sometimes denote $\Sigma_1$ by $\sO(1,0)$, $\Sigma_2$ by $\sO(0,1)$ and $\sO(1)$ by $\sO(1,1)$, using the notation
$\sO(a,b)$ as well.

Let $S=k[x_0,x_1,x_2,x_3]$ be the polynomial ring and let $S(\Q)$ be the coordinate ring of $\Q$. We will always view $S(\Q)$
as a graded $k$-algebra with its grading inherited from $S$.
For any sheaf $F$ on $\Q$, we define $H^i_*(F)$ to be the graded $S(\Q)$-module $\bigoplus _{\nu\in \mathbb Z}H^i(\Q, F(\nu))$.

\begin{definition} Let $F$ be a sheaf on $\Q$.
\begin{enumerate}
    \item The graded $\Sigma_1$-socle sub-module of $H^i_*(F)$  is the kernel of
    the map $$H^i_*(F) \xrightarrow{[-t,s]^{\vee}}  H^i_*(2F\otimes \Sigma_1))$$.
    \item The graded $\Sigma_2$-socle sub-module of $H^i_*(F)$  is the kernel of
    the map $$H^i_*(F) \xrightarrow{[-v,u]^{\vee}}  H^i_*(2F\otimes \Sigma_2))$$.
\end{enumerate}
\end{definition}

If $F$ is a vector bundle on $\Q$, $H^1_*(F)$ and its $\Sigma_i$-socle sub-modules  are graded
finite length $S(\Q)$-modules. The $\Sigma_i$-socle sub-modules are annihilated by the forms $x_0,x_1,x_2,x_3$, hence
their $S(\Q)$-module structure descends to a graded $k$-vector space structure.

\begin{lemma} \label{socle} Let $F$ be a sheaf on $\Q$, $V$  a finite-dimensional graded subspace consisting of
$\Sigma_i$-socle elements in $H^1_*(F)$. Then there is a homomorphism $\alpha: V\otimes_k\Sigma_i^{-2} \to F$ such that
$H^1_*(\alpha)$ has image $V$. For any other such map $\alpha'$, there is an automorphism $\lambda$ of $V$ such that
$H^1_*(\alpha\circ (\lambda \otimes 1))= H^1_*(\alpha')$.
\end{lemma}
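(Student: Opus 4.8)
The plan is to build $\alpha$ one homogeneous basis vector at a time, converting each socle class into a genuine sheaf map out of a twist of $\Sigma_i^{-2}$ by means of the canonical spinor sequence. I describe the case $i=1$; the case $i=2$ is identical, using the second canonical sequence. First I would tensor the sequence $0\to\Sigma_1^{-1}\xrightarrow{[-t,s]^\vee}2\sO\xrightarrow{[s,t]}\Sigma_1\to0$ by $F\otimes\Sigma_1$ to obtain
\[
0\to F\xrightarrow{[-t,s]^\vee}2(F\otimes\Sigma_1)\xrightarrow{[s,t]}F\otimes\Sigma_1^2\to0
\]
(exact since the spinor sequence is locally split), whose long exact sequence in $H^1_*$ contains
\[
H^0_*(F\otimes\Sigma_1^2)\xrightarrow{\ \partial\ }H^1_*(F)\xrightarrow{[-t,s]^\vee}H^1_*(2F\otimes\Sigma_1).
\]
By the definition of the $\Sigma_1$-socle submodule, the socle elements are exactly $\ker[-t,s]^\vee=\im\,\partial$. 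So, fixing a homogeneous basis $\xi_1,\dots,\xi_r$ of $V$ with $\xi_j\in H^1(F(d_j))$, each $\xi_j$ lifts to some $\eta_j\in H^0(F\otimes\Sigma_1^2(d_j))=\Hom(\Sigma_1^{-2}(-d_j),F)$ with $\partial(\eta_j)=\xi_j$, and I set $\alpha:=\bigoplus_j\eta_j\colon\bigoplus_j\Sigma_1^{-2}(-d_j)=V\otimes_k\Sigma_1^{-2}\to F$.

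Next I would verify that $H^1_*(\alpha)$ has image exactly $V$. The point is that the source has the smallest possible cohomology: a K\"unneth computation gives $H^1(\Sigma_1^{-2}(p))=H^1(\sO(p-2,p))$, which is nonzero, and then one-dimensional, precisely when $p=0$. Hence $H^1_*(\Sigma_1^{-2}(-d_j))$ is one-dimensional, concentrated in degree $d_j$; let $\zeta_j$ be a generator, taken as the image of $1$ under the connecting isomorphism of the twist of $0\to\Sigma_1^{-2}\to2\Sigma_1^{-1}\to\sO\to0$ (an isomorphism because $H^0(\Sigma_1^{-1})=H^1(\Sigma_1^{-1})=0$). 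Thus $H^1_*(V\otimes_k\Sigma_1^{-2})=\bigoplus_j k\,\zeta_j\cong V$ as graded vector spaces. The key identity is $H^1_*(\eta_j)(\zeta_j)=\partial(\eta_j)=\xi_j$, which follows from naturality of the connecting homomorphism applied to the morphism of short exact sequences induced by $\eta_j$. Consequently $H^1_*(\alpha)$ carries the basis $\{\zeta_j\}$ to $\{\xi_j\}$, so its image is $\mathrm{span}\{\xi_j\}=V$; by equality of dimensions it is in fact the inclusion $V\hookrightarrow H^1_*(F)$ under the identification above.

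For the uniqueness clause, let $\alpha'$ be any map with $H^1_*(\alpha')$ of image $V$. Both $H^1_*(\alpha)$ and $H^1_*(\alpha')$ are graded maps from $H^1_*(V\otimes_k\Sigma_1^{-2})\cong V$ onto the $\dim V$-dimensional space $V$, hence both are graded isomorphisms onto $V$ by a dimension count. Setting $\lambda:=H^1_*(\alpha)^{-1}\circ H^1_*(\alpha')\in\Aut(V)$ and using $H^1_*(\lambda\otimes1)=\lambda$ (since $H^1_*(\Sigma_1^{-2})$ is one-dimensional), I obtain $H^1_*(\alpha\circ(\lambda\otimes1))=H^1_*(\alpha)\circ\lambda=H^1_*(\alpha')$, as required.

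The hard part will be the key identity $H^1_*(\eta_j)(\zeta_j)=\xi_j$: one must check that the extension class governing $H^1_*(\Sigma_1^{-2})$ is the very class whose connecting map produced the lift $\eta_j$, so that passing to $H^1_*$ of the constructed sheaf map genuinely returns the socle element we started from, in the correct degree, rather than an unrelated scalar or a class in a different graded piece. The supporting vanishings $H^0(\Sigma_1^{-1})=H^1(\Sigma_1^{-1})=0$, which force the relevant connecting map to be an isomorphism and thereby pin down $\zeta_j$, are the technical linchpin; once these are in place, everything else is formal diagram chasing and dimension counting.
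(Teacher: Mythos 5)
Your proof is correct and follows essentially the same route as the paper's: both tensor the canonical spinor sequence by $F\otimes\Sigma_1$, lift the socle subspace through the connecting homomorphism to $H^0_*(F\otimes\Sigma_1^2)$, read off $\alpha$ from the induced map of short exact sequences, and obtain $\lambda$ as $H^1_*(\alpha)^{-1}\circ H^1_*(\alpha')$. Your basis-by-basis construction and the explicit verification that $H^1_*(\Sigma_1^{-2})=k$ via the twisted sequence $0\to\Sigma_1^{-2}\to 2\Sigma_1^{-1}\to\sO\to0$ merely make explicit what the paper leaves as a diagram chase.
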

\begin {proof} Consider(for the case $\Sigma_i=\Sigma_1$) the canonical $\Sigma_1$ sequence (\ref{spinor}) tensored
by $F\otimes \Sigma_1$
$$0 \to  F \xrightarrow{[-t,s]^\vee} 2 F\otimes\Sigma_1  \xrightarrow{[s,t]} F\otimes\Sigma_1^2 \to 0.$$
There is a graded subspace $V'$ of $H^0_*(F\otimes\Sigma_1^2 )$ which is mapped isomorphically to $V \subset H^1_*(F)$. This
induces a map $\alpha: V'\otimes_k\sO \to F\otimes\Sigma_1^2 $ and thus the commuting diagram
$$
\begin{array}{cccccccc}

0 \to & F &\xrightarrow{[-t,s]^\vee} &2 F\otimes\Sigma_1  &\xrightarrow{[s,t]} &F\otimes\Sigma_1^2 \to 0 \\
    &\uparrow\alpha   &            &  \uparrow{\alpha \oplus \alpha} &         & \uparrow\alpha &\\
0 \to &V'\otimes_k\Sigma_1^{-2} &\xrightarrow{[-t,s]^\vee} &2 V'\otimes_k\Sigma_1^{-1}  &\xrightarrow{[s,t]} &V'\otimes_k\sO
\to 0
\end{array}
$$
Then $H^1_*(\alpha): H^1_*(V'\otimes_k\Sigma_1^{-2}) \to H^1_*(F)$ gives $V' \cong V$.

Any other map $\alpha'$ inducing a similar isomorphism must fit into a similar commutative diagram, with some $V''\subset
H^0_*(F\otimes\Sigma_1^2 )$. The map $H^1_*(\alpha)^{-1}\circ H^1_*(\alpha'): V'' \cong V \cong V'=H^1_*(V'\otimes_k
\Sigma_1^{-2})$ is likewise induced by some $\lambda: V''\otimes_k\sO \to V'\otimes_k\sO$ and it easy to verify that
$\lambda$ is
 an isomorphism.
 \end{proof}

For any vector bundle $E$ on $\Q$, we define its first cohomology module $M(E)= H^1_*(E)$. For $\sO$, $\Sigma_i$, $\Sigma_i
^{-1}$, this module is zero, hence these three are ACM line bundles on $\Q$. Using the K\"unneth formula on $\pee1\times
\pee1$, we can see that, as graded vector spaces, $M(\Sigma_i^{-2}) =k$ , $M(\Sigma_i^{-3})= k^{\oplus 2} \oplus k^{\oplus
2}$ (two components in degrees 0 and 1) .

\begin{definition} A vector bundle $L$ on $\Q$ will be called free if $L\cong \bigoplus_{i=1}^n\sO(a_i), a_i \in \mathbb Z$.
A vector bundle $E$ on $\Q$ has no ACM summands if it cannot be expressed as $E'\oplus L$ where $L$ is some line bundle of
the form $\sO(a),\Sigma_1(b),\Sigma_2(c)$.
\end{definition}

Consider a finitely generated graded $S(\Q)$-module $M$. It has a minimal free
presentation as an $S(\Q)$-module, given by a homogeneous map between free modules, where minimality means that the matrix of
the map has no entry equal to a unit and no column equal to zero. This minimal presentation is unique up to isomorphism and
any other free presentation of $M$ will split as the direct sum of the
minimal presentation and a complex of free modules with the zero map.

In the special case when $M$ is a graded $S(\Q)$-module of finite length, we may sheafify the minimal presentation to  get a
sequence of vector bundles on $\Q$ (since $M$ gives the zero sheaf on $\Q$):
$$ \Psi: 0 \to F_M \to L_1 \to L_0 \to 0,$$
where $L_1, L_0$ are free bundles, and $F_M$, unique up to isomorphism, is locally free on $\Q$, with $H^1_*(F_M) = M$. We will
call $F_M$ the vector bundle associated to $M$.

\begin{definition} Let $M$ be a graded $S(\Q)$-module of finite length. Define $M_{\Sigma_i}$ to be the
(finite length) graded $S(Q)$-module $H^1_*(F_M \otimes \Sigma_i)$, where $F_M$ is the vector bundle associated to $M$.
\end{definition}

 Let $E$ be a vector bundle on $\Q$. Since the invariant $M= H^1_*(E)$ is a graded $S(\Q)$-module  of finite length, we can
 attach to  $E$ the invariants $M, F_M,  M_{\Sigma_1}, M_{\Sigma_2}$.  The short exact sequence obtained from the minimal
 presentation of  $M$,
 $$ \Psi: 0 \to F_M \to L_1 \xrightarrow{\psi} L_0 \to 0,$$
 gives an  extension class $\Psi\in \Ext^1(L_0,F_M)\cong H^1(L_0^\vee \otimes F_M)$.

 The construction of $\Psi$ started with a minimal presentation of $M$, hence a surjective homomorphism of $S(\Q)$-modules
 $H^0_*(L_0)\to M$. Following Horrocks \cite{Ho} (see also \cite{BH}), we have a second construction.
 From the homomorphism $H^0_*(L_0)\to M$, we get an element (of degree 0) in $H^0_*(L_0)^{\vee}
 \otimes M \cong H^1_*(L_0^{\vee} \otimes E)\cong \Ext^1(L_0,E)$.
 Hence we get an exact sequence
 $$ \gamma: 0 \to E \xrightarrow {f} A_E \xrightarrow {g} L_0 \to 0,$$
 whose connecting homomorphism $H^0_*(L_0)\to H^1_*(E)$ returns the original map $H^0_*(L_0)\to M$. Therefore the bundle
 $A_E$ in the middle of $\gamma$ has $H^1_*(A_E)=0$. Thus $A_E$ is an ACM bundle $A_E$.

 We call this sequence a $\gamma$-sequence for $E$, where $A_E$ is ACM and $L_0$ is free, with $H^0_*(L_0)$ minimally
 surjecting onto  $H^1_*(E)$. There  are natural actions of $\Aut(L_0)$ and $\Aut(E)$ on the
 collection of such extensions $\gamma$ for a fixed $E$.

\begin{lemma}\label{liftings} Let $E,E'$ be bundles  on $\Q$, with $\gamma$-sequences $\gamma: 0 \to E \xrightarrow {f} A_E
\xrightarrow {g}L_0 \to 0$, and $ \gamma': 0 \to E' \xrightarrow {f'} A_{E'} \xrightarrow {g'}L_0' \to 0 $ .
\begin{enumerate}
\item Any homomorphism $\sigma: E \to E'$ gives an induced homomorphism $M(E)\to M(E')$ and an induced a map of exact
sequences $\gamma \to \gamma'$ \ along with well-defined commutative diagrams
 \[
\begin{CD} H^1(E\otimes \Sigma_i)& \xrightarrow {f} & H^1_*(A_E\otimes \Sigma_i)&\to 0\\
            \downarrow &               & \downarrow &\\
    H^1(E'\otimes \Sigma_i)& \xrightarrow {f'} &H^1_*(A_{E'}\otimes \Sigma_i)&\to 0
\end{CD} \] for $i=1,2$.
\item If $\sigma$ induces an isomorphism $M(E)\to M(E')$, then $L_0\cong L_0'$ and modulo the action of $\Aut(L_0)$, we may
take the map $\gamma \to \gamma'$ of exact sequences to be the push-out of $\gamma$ by $\sigma$.
\end{enumerate}
\end{lemma}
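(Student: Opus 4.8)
The plan is to run everything through the Horrocks dictionary that already produced $\gamma$. Writing $\phi\colon H^0_*(L_0)\twoheadrightarrow M(E)$ and $\phi'\colon H^0_*(L_0')\twoheadrightarrow M(E')$ for the minimal surjections used to build $\gamma,\gamma'$, recall that under $\Ext^1(L_0,E)\cong H^1_*(L_0^\vee\otimes E)\cong \Hom_{S(\Q)}(H^0_*(L_0),M(E))$ the class of $\gamma$ is exactly $\phi$ (and similarly $\gamma'\leftrightarrow\phi'$). The one structural fact I would record first is that under this identification the push-out $\sigma_*$ along $\sigma\colon E\to E'$ is post-composition by $H^1_*(\sigma)$, and for a bundle map $\tau\colon L_0\to L_0'$ the pull-back $\tau^*$ is pre-composition by $H^0_*(\tau)$.

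For part (1), the induced map $M(E)\to M(E')$ is just $H^1_*(\sigma)$. Since $H^0_*(L_0)$ is free, hence projective, and $\phi'$ is surjective, I would lift $H^1_*(\sigma)\circ\phi$ through $\phi'$ to a bundle map $\tau\colon L_0\to L_0'$ with $\phi'\circ H^0_*(\tau)=H^1_*(\sigma)\circ\phi$. By the dictionary this reads $\tau^*\gamma'=\sigma_*\gamma$ in $\Ext^1(L_0,E')$, which is precisely the condition for $(\sigma,\tau)$ to extend to a morphism of short exact sequences; completing it supplies the middle map $\beta\colon A_E\to A_{E'}$ and hence the map $\gamma\to\gamma'$. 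For the two commutative squares, I would tensor this morphism by $\Sigma_i$ and take the long exact cohomology sequences. Here the decisive input is $H^1_*(L_0\otimes\Sigma_i)=0$: as $L_0$ is a sum of copies of $\sO(a,a)$ and $\Sigma_1=\sO(1,0)$, $\Sigma_2=\sO(0,1)$, K\"unneth gives $H^1(\sO(a+1,a))=H^1(\sO(a,a+1))=0$ for all $a$. This makes $f\colon H^1_*(E\otimes\Sigma_i)\to H^1_*(A_E\otimes\Sigma_i)$ surjective (the reason for the $\to 0$), and naturality of the boundary maps gives commutativity.

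The subtle clause is ``well-defined,'' since neither $\tau$ nor $\beta$ is unique. The cleanest argument is that any two admissible $\beta$ differ by $f'\circ h\circ g$ for some $h\colon L_0\to E'$, and after $\otimes\Sigma_i$ this difference factors through $H^1_*(L_0\otimes\Sigma_i)=0$; equivalently, surjectivity of $f$ forces the right-hand vertical arrow to be the unique map compatible with $H^1_*(\sigma\otimes\Sigma_i)$ on the left, so it is independent of all choices. I expect this to be the only genuine obstacle in the lemma, and it is dissolved exactly by the vanishing above together with the surjectivity of $f$.

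For part (2), if $H^1_*(\sigma)$ is an isomorphism then $H^1_*(\sigma)\circ\phi$ is again a minimal surjection onto $M(E')$. Since $L_0$ is the sheafification of the minimal free cover of $M(E)\cong M(E')$, it agrees with the cover $L_0'$ of $M(E')$, giving $L_0\cong L_0'$. Now $H^1_*(\sigma)\circ\phi=\sigma_*\phi$ and $\phi'$ are two minimal surjections $H^0_*(L_0)\twoheadrightarrow M(E')$, so by graded Nakayama they differ by some $\mu\in\Aut(L_0)$, i.e. $\phi'\circ H^0_*(\mu)=\sigma_*\phi$. Hence the push-out $\sigma_*\gamma$, whose classifying homomorphism is $\sigma_*\phi$, coincides with $\gamma'$ after applying the $\Aut(L_0)$-action; the canonical push-out map $\gamma\to\sigma_*\gamma$ then furnishes the morphism $\gamma\to\gamma'$ asserted.
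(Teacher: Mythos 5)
Your proof is correct, but it builds the morphism of exact sequences by a different mechanism than the paper does. The paper constructs the \emph{middle} map first: applying $\Hom(-,A_{E'})$ to $\gamma$ gives the exact sequence $\Hom(A_E,A_{E'})\to\Hom(E,A_{E'})\to\Ext^1(L_0,A_{E'})=0$, the vanishing holding because $A_{E'}$ is ACM and $L_0$ is free; thus $f'\circ\sigma$ lifts to $\sigma_1\colon A_E\to A_{E'}$, the quotient map comes for free, and well-definedness is the observation that the ambiguity in $\sigma_1$ factors through $L_0$, hence dies on $H^1_*(\cdot\otimes\Sigma_i)$ because $H^1_*(L_0\otimes\Sigma_i)=0$ --- the same vanishing you use. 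You instead construct the \emph{bottom} map $\tau$ first, via projectivity of $H^0_*(L_0)$ over the module surjection $\phi'$, and then invoke the standard criterion that $\sigma_*\gamma=\tau^*\gamma'$ guarantees a filling middle map; this trades the ACM-based vanishing $\Ext^1(L_0,A_{E'})=0$ for the Horrocks dictionary $\Ext^1(L_0,E)\cong\Hom_{S(\Q)}(H^0_*(L_0),M(E))$, which the paper does establish when it constructs $\gamma$ (the connecting homomorphism of $\gamma$ returns $\phi$), so you are entitled to it. Your part (2) is essentially the paper's terse argument made explicit: where the paper asserts that $L_0\cong L_0'$ and that $\sigma_2$ is an isomorphism "by minimality," your graded-Nakayama comparison of the two minimal surjections $\sigma_*\phi$ and $\phi'$ justifies both claims at once and directly exhibits $\gamma'$ as $\sigma_*\gamma$ modulo the $\Aut(L_0)$-action, which is exactly the assertion to be proved. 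One small imprecision: your claim that two admissible middle maps differ by $f'\circ h\circ g$ is valid only when the bottom map $\tau$ is held fixed; if $\tau$ also varies, the difference is merely of the form $c\circ g$ for some $c\colon L_0\to A_{E'}$, not necessarily factoring through $f'$. This does not harm the argument, since any map factoring through $g$ still induces zero on $H^1_*(\cdot\otimes\Sigma_i)$ by the same vanishing $H^1_*(L_0\otimes\Sigma_i)=0$, and your alternative uniqueness argument from surjectivity of $f$ covers the general case in any event.
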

\begin{proof} The first follows from the exact sequence $$0 \to \Hom(L_0,A_{E'})\xrightarrow{g} \Hom(A_E,A_{E'})\xrightarrow{f}
\Hom(E,A_{E'})\to \Ext^1(L_0,A_{E'}),$$ where the last group is zero. In the resulting diagram
\[
\begin{CD} 0 \to & E &\xrightarrow {f}   &A_E &    \xrightarrow {g} &L_0& \to 0\\
                  &  \downarrow{ \sigma}&  &\downarrow{ \sigma_1}& & \downarrow{ \sigma_2}&\\
           0 \to &E'& \xrightarrow {f'}  &A_{E'}&\xrightarrow {g'} &L_0'& \to 0
           \end{CD}\]
the induced map $\sigma_1:A_E \to A_{E'}$ is well-defined up to a map factoring through $L_0$, which would not change the map
$\sigma_1:H^1_*(A_E\otimes \Sigma_i) \to H^1_*(A_{E'}\otimes \Sigma_i)$.
\par
For the second part, since both $L_0, L_0'$ arise from minimal resolutions of the same module $M(E)\cong M(E')$, they are
isomorphic, and furthermore $\sigma_2: H^0_*(L_0)\to H^0_*(L_0')$ is  an isomorphism since it induces the isomorphism
$\sigma:M(E)\to M(E')$.
\end{proof}

\begin {corollary}\label{c1} Let $E$ be a bundle on $\Q$ and let $M=M(E)$ and $F$ be the vector bundle associated to the
module $M$. Then there is a homomorphism $\beta: F \to E$ such that  $\gamma$ is the push-out by $\beta$ of  $\Psi$ (modulo $\Aut(L_0)$).
\end{corollary}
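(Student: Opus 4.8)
The plan is to produce $\beta$ by a short obstruction computation and to read the push-out off directly from $\Ext$ groups, after which Lemma \ref{liftings} gives the same conclusion in the language of $\gamma$-sequences. First I would observe that $\Psi\colon 0 \to F \to L_1 \xrightarrow{\psi} L_0 \to 0$ (with $F=F_M$) is itself a $\gamma$-sequence for the bundle $F$: here $L_1$ is free, hence ACM, $L_0$ is free, and the connecting homomorphism $q\colon H^0_*(L_0)\to H^1_*(F)=M$ is exactly the minimal surjection used to build $\gamma$. In particular $M(F)=M=M(E)$, and both $\Psi$ and $\gamma$ are extensions with the \emph{same} free quotient $L_0$ and the \emph{same} connecting map $q$.

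The key step is to exhibit a homomorphism $\beta\colon F\to E$ with $\beta_*[\Psi]=[\gamma]$ in $\Ext^1(L_0,E)$, which is precisely the assertion that $\gamma$ is the push-out of $\Psi$ along $\beta$. Applying $\Hom(-,E)$ to $\Psi$ gives the exact sequence
\[
\Hom(L_1,E)\xrightarrow{\,i^*\,}\Hom(F,E)\xrightarrow{\,\partial\,}\Ext^1(L_0,E)\xrightarrow{\,\psi^*\,}\Ext^1(L_1,E),
\]
in which $i\colon F\hookrightarrow L_1$ is the inclusion and $\partial(\beta)=\beta_*[\Psi]$. Thus $[\gamma]$ lies in the image of $\partial$, so a suitable $\beta$ exists, as soon as $\psi^*[\gamma]=0$.

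The main obstacle is therefore the vanishing of the obstruction $\psi^*[\gamma]\in\Ext^1(L_1,E)$. I would compute it by applying $\Hom(L_1,-)$ to $\gamma$ instead: the class $\psi^*[\gamma]=[\gamma]\circ\psi$ is the image $\delta(\psi)$ of $\psi\in\Hom(L_1,L_0)$ under the connecting map $\delta\colon\Hom(L_1,L_0)\to\Ext^1(L_1,E)$ of that sequence. Under the identification $\Ext^1(L_1,E)\cong H^1(L_1^\vee\otimes E)$, naturality of connecting homomorphisms represents $\delta(\psi)$ by the composite $q\circ H^0_*(\psi)\colon H^0_*(L_1)\to H^0_*(L_0)\to M$, since the connecting map of $\gamma$ is $q$ (using $H^1_*(A_E)=0$). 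But $H^0_*(L_1)\xrightarrow{H^0_*(\psi)}H^0_*(L_0)\xrightarrow{q}M$ is the start of the minimal free presentation of $M$, so this composite is zero; hence $\psi^*[\gamma]=0$ and $\beta$ exists with $\beta_*[\Psi]=[\gamma]$.

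Finally I would record that the indeterminacy in $\beta$ is exactly $\im(i^*)$, i.e.\ maps $F\to E$ factoring through $L_1$, which do not change the push-out class $\beta_*[\Psi]=[\gamma]$; the residual freedom in identifying $L_0$ from the minimal resolution accounts for the phrase ``modulo $\Aut(L_0)$''. As a consistency check, the map of short exact sequences $\Psi\to\gamma$ arising from the push-out is the identity on $L_0$, so naturality gives $H^1_*(\beta)\circ q=q$; since $q$ is surjective, $H^1_*(\beta)=\mathrm{id}_M$, and then Lemma \ref{liftings}(2) applied with $\sigma=\beta$ (viewing $\Psi$ and $\gamma$ as $\gamma$-sequences for $F$ and $E$) recovers the same statement.
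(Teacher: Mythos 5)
Your proof is correct, and it is the paper's argument in dual packaging rather than a genuinely independent one. The paper's own proof is a two-line lifting argument: since $H^0_*(\psi)$ and $H^0_*(g)$ both present $M$ via the \emph{same} surjection $q\colon H^0_*(L_0)\to M$, and $H^0_*(L_1)$ is a free graded $S(\Q)$-module, the map $\psi$ lifts through $g$ to a map $L_1\to A_E$; then $\beta\colon F\to E$ is the induced map on kernels, and the resulting map of extensions is the identity on $L_0$, which is the push-out statement. Your obstruction computation is equivalent to this lift via the exact sequence $\Hom(L_1,A_E)\xrightarrow{g_*}\Hom(L_1,L_0)\xrightarrow{\delta}\Ext^1(L_1,E)$: the vanishing $\psi^*[\gamma]=\delta(\psi)=0$, which you establish by identifying $\delta(\psi)$ with $q\circ H^0_*(\psi)$, is precisely the solvability of the paper's lifting problem, and both rest on the same fact, namely $\im H^0_*(\psi)\subseteq\ker q$ (exactness of the minimal presentation). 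What your route buys is that the identity $\beta_*[\Psi]=[\gamma]$ is read off directly at the level of extension classes from exactness of the $\Hom(-,E)$ sequence, with no need to argue separately that a map of extensions inducing the identity on $L_0$ exhibits a push-out; what the paper's route buys is brevity and an explicit commuting diagram (the lift $L_1\to A_E$) of the kind it reuses in later arguments. Your closing observations --- that the indeterminacy in $\beta$ is exactly maps factoring through $L_1$, that the choice of minimal presentation accounts for ``modulo $\Aut(L_0)$,'' and that naturality forces $H^1_*(\beta)=\mathrm{id}_M$ --- are also correct and recover the final sentence of the paper's proof.
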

\begin{proof} Since both
$H^0_*(\psi)$ and $H^0_*(g)$ present $M$ and since $H^0_*(L_1)$ is free, we can find a lifting $L_1 \to A_E$ making the
diagram commute. This gives a map $\beta:F \to E$ inducing an isomorphism on $H^1_*$.
 \end{proof}

The following proposition obtains conditions for comparing extensions $\gamma$ and $\gamma'$ for two bundles $E$ and $E'$.

 \begin{proposition}\label{gamma-sequences} Let $E,E'$ be two vector bundles on $\Q$ with $M(E)\cong M(E')\cong M$.
 Let $F$ be the vector bundle
 associated to the module $M$, and let $\beta: F \to E, \beta': F\to E'$ be morphisms for which $\gamma, \gamma'$ are push-outs
 of $\Psi$. If $\ker [1\otimes \beta: H^1_*(\Sigma_i \otimes F) \to H^1_*(\Sigma_i \otimes E)]$ equals
 $\ker [1\otimes \beta': H^1_*(\Sigma_i\otimes F) \to H^1_*(\Sigma_i \otimes E')]$ for $i=1,2$, then there
 exists $\sigma: E \to E'$ such that $\gamma'$ is
 the push-out of $\gamma$ by $\sigma$, and $\sigma$ induces $M(E)\cong M(E')$.
 \end{proposition}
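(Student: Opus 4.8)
The plan is to realize $\sigma$ as the solution of a lifting problem and to reduce its existence to the vanishing of one obstruction class. Since $\gamma$ is the push-out of $\Psi$ by $\beta$, the push-out square gives the short exact sequence
$$0 \to F \xrightarrow{(\beta,\,-j)} E\oplus L_1 \to A_E \to 0,$$
where $j\colon F\hookrightarrow L_1$ is the inclusion in $\Psi$. Applying $\Hom(-,E')$ yields
$$\Hom(E,E')\oplus\Hom(L_1,E') \xrightarrow{\ \Phi\ } \Hom(F,E') \xrightarrow{\ \partial\ } \Ext^1(A_E,E'),\qquad \Phi(\sigma,\mu)=\sigma\beta-\mu j .$$
It suffices to prove $\partial(\beta')=0$. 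Indeed, this produces $\sigma\colon E\to E'$ and $\mu\colon L_1\to E'$ with $\sigma\beta=\beta'+\mu j$; since $\mu j$ lies in the image of $j^{*}\colon\Hom(L_1,E')\to\Hom(F,E')$, which is the kernel of the connecting map $\partial_\Psi\colon\Hom(F,E')\to\Ext^1(L_0,E')$ of $\Psi$, we get $\partial_\Psi(\sigma\beta)=\partial_\Psi(\beta')$. By functoriality of push-outs the push-out of $\gamma$ by $\sigma$ equals the push-out of $\Psi$ by $\sigma\beta$, hence equals $\gamma'$ modulo $\Aut(L_0)$. Finally, applying $H^1_*$ to $\sigma\beta=\beta'+\mu j$ and using $H^1_*(L_1)=0$ gives $H^1_*(\sigma)\circ H^1_*(\beta)=H^1_*(\beta')$, so $\sigma$ induces an isomorphism $M(E)\cong M(E')$ because $H^1_*(\beta)$ and $H^1_*(\beta')$ are isomorphisms by \ref{c1}.

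The equality of kernels is precisely the obvious necessary condition for such a $\sigma$: tensoring $\sigma\beta=\beta'+\mu j$ by $\Sigma_i$ and applying $H^1_*$ (noting $H^1_*(L_1\otimes\Sigma_i)=0$, since twists of free bundles by $\Sigma_i$ are ACM) gives $H^1_*(\sigma\otimes\Sigma_i)\circ(1\otimes\beta)=(1\otimes\beta')$, which forces $\ker(1\otimes\beta)\subseteq\ker(1\otimes\beta')$. To prove sufficiency I would decompose $\partial(\beta')$. As $A_E$ is ACM, Kn\"orrer's theorem \cite{Kn} writes it as a sum of line bundles $\sO(a)$, $\Sigma_1(b)$, $\Sigma_2(c)$, so
$$\Ext^1(A_E,E')=\bigoplus_a H^1(E'(-a))\ \oplus\ \bigoplus_b H^1(E'\otimes\Sigma_1^{-1}(-b))\ \oplus\ \bigoplus_c H^1(E'\otimes\Sigma_2^{-1}(-c)).$$
Using $\Sigma_1\Sigma_2=\sO(1)$, the first summands are graded pieces of $M(E')$, while the $\Sigma_1^{-1}$- and $\Sigma_2^{-1}$-summands are graded pieces of $H^1_*(E'\otimes\Sigma_2)$ and $H^1_*(E'\otimes\Sigma_1)$. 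For the free summands, the component of $\partial(\beta')$ in $H^1(E'(-a))=M(E')_{-a}$ records the discrepancy between the surjection $H^0_*(L_0)\to M(E')$ coming from $\gamma'$ and the presentation of $M$ coming from $\Psi$; since $\beta'$ induces $M\cong M(E')$ and $A_{E'}$ is ACM, these agree and the free components vanish.

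The spinor components are the crux. Tensoring the sequence of the first paragraph by $\Sigma_i$ and taking $H^1_*$ identifies $\im(1\otimes\beta\colon M_{\Sigma_i}\to H^1_*(E\otimes\Sigma_i))$ with $\ker\!\big(H^1_*(E\otimes\Sigma_i)\to H^1_*(A_E\otimes\Sigma_i)\big)$ and identifies $H^1_*(A_E\otimes\Sigma_i)$ with $\coker(1\otimes\beta)$; thus the spinor summands of $A_E$ are read off from this cokernel and are realized by the socle maps of \ref{socle}. The plan is to show that each spinor component of $\partial(\beta')$ equals $(1\otimes\beta')$ applied to a socle element of $M_{\Sigma_i}$ which, by the construction of the corresponding summand of $A_E$, lies in $\ker(1\otimes\beta)$; the hypothesis $\ker(1\otimes\beta)=\ker(1\otimes\beta')$ then makes this image zero, so the component vanishes and $\partial(\beta')=0$. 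The main obstacle is exactly this last identification: one must match the index detecting a spinor summand of $A_E$ (through $H^1_*(A_E\otimes\Sigma_i)$) with the index in which its obstruction against $E'$ lives (a $\Sigma_1$-summand contributes to $\Sigma_2$-cohomology of $E'$ and conversely), carefully tracking this cross-over via $\Sigma_1\Sigma_2=\sO(1)$ and Lemma \ref{socle}, and then verifying that the relevant socle element genuinely belongs to $\ker(1\otimes\beta)$ rather than merely mapping compatibly. This is the content the equality of kernels is designed to supply.
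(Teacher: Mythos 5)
Your first paragraph is correct, and it is in substance the paper's own argument in dual form: the obstruction $\partial(\beta')\in\Ext^1(A_E,E')$ is the push-forward $\beta'_*\epsilon$ of the class $\epsilon\in\Ext^1(A_E,F)$ of your sequence $0\to F\to E\oplus L_1\to A_E\to 0$, and since that sequence represents the pull-back $g^*\Psi$, your $\partial(\beta')$ is exactly the element $(1\otimes\beta')\circ(g^\vee\otimes 1)(\Psi)$ whose vanishing the paper establishes. Your deduction of the push-out property and of $M(E)\cong M(E')$ from $\sigma\beta=\beta'+\mu j$ is fine (indeed cleaner than the paper's surjectivity-plus-dimension count). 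The problem is that you never prove $\partial(\beta')=0$. For the free summands you offer only the unverifiable heuristic that the component ``records the discrepancy'' between two presentations, and for the spinor summands -- which you yourself call the crux -- you state a ``plan'' and concede that ``the main obstacle is exactly this last identification.'' Since the vanishing of the obstruction is the entire content of the proposition, this is a genuine gap, not a verification left to the reader.

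The gap is closed by one observation you never make, and it also makes your proposed socle/index-matching detour (via Lemma \ref{socle}) unnecessary. Apply $\Hom(-,E)$ to your own sequence: since $\beta$ is in the image of the analogue of $\Phi$ with target $E$ (take $\sigma=Id_E$, $\mu=0$), exactness forces $\beta_*\epsilon=0$ in $\Ext^1(A_E,E)$; equivalently, $\beta_*(g^*\Psi)=g^*(\beta_*\Psi)=g^*\gamma=0$ because pulling $\gamma$ back along its own surjection $g$ splits it. Now use your Kn\"orrer decomposition of $\Ext^1(A_E,-)$ and note that $\beta_*$ and $\beta'_*$ act componentwise as $1\otimes\beta$ and $1\otimes\beta'$. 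On each free component, $1\otimes\beta: H^1(F(-a))\to H^1(E(-a))$ is an isomorphism because $H^1_*(\beta)$ is one (Corollary \ref{c1}); hence $\beta_*\epsilon=0$ already forces the free components of $\epsilon$ to vanish outright -- no discrepancy argument is needed, and it is the components of $\epsilon$, not of $\beta'_*\epsilon$, that one should kill. The same identity $\beta_*\epsilon=0$ then says the remaining spinor components of $\epsilon$, which after the cross-over $\Sigma_1^{-1}(-b)=\Sigma_2(-1-b)$ sit in graded pieces of $H^1_*(F\otimes\Sigma_i)$, lie in $\ker(1\otimes\beta)$ \emph{automatically}; there is nothing to ``verify genuinely belongs,'' which was the step you could not complete. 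The hypothesis $\ker(1\otimes\beta)=\ker(1\otimes\beta')$ for both $i=1,2$ (the two indices are what absorb the cross-over) then gives $\partial(\beta')=\beta'_*\epsilon=0$, and your first paragraph finishes the proof. So your architecture matches the paper's; what is missing is precisely the identity $\beta_*\epsilon=0$ and the realization that it, rather than any socle analysis, places the spinor components of the obstruction inside $\ker(1\otimes\beta)$.
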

 \begin{proof}

The extension class $\gamma$ can be viewed (with abuse of notation) as the element in $H^1(L_0^\vee \otimes E)$ given by
$\delta(\gamma)(Id_{L_0})$ where $\delta(\gamma): H^0(L_0^\vee \otimes L_0) \to H^1(L_0^\vee \otimes E)$ is the connecting
homomorphism, which is also (up to sign) the image of $Id_E$ in the connecting homomorphism $\delta(\gamma^\vee): H^0(E^\vee
\otimes E) \to H^1(L_0^\vee
\otimes E)$ for the dual short exact sequence $\gamma^\vee$.\\
Consider the commuting diagram derived from $\gamma^\vee$:
\[ \begin{CD}
H^0(E^\vee \otimes E) \xrightarrow{\delta(\gamma^\vee)} & H^1(L_0^\vee \otimes E) &\xrightarrow{g^\vee\otimes 1}
&H^1(A_E \otimes E)\\
& \uparrow 1 \otimes \beta &  & \uparrow  1 \otimes \beta  \\
& H^1(L_0^\vee\otimes F)  &\xrightarrow{ g^\vee\otimes 1} &H^1(A_E\otimes F). \\
\end{CD}
\]

The element $\Psi \in H^1(L_0^\vee\otimes F)$ is pushed out by $\beta$ to the extension class $\gamma$ which is
$\delta(\gamma^\vee)(I_E)$. Hence $(1 \otimes \beta)\circ (g^\vee\otimes 1)(\Psi)= (g^\vee\otimes 1)\circ (1 \otimes
\beta)(\Psi)=0$. Since $A_E$ is ACM, $1\otimes \beta:H^1(A_E\otimes F) \to H^1(A_E \otimes E)$ is a direct sum of maps
$H^1(\Sigma_i(t)\otimes F) \to H^1(\Sigma_i(t)\otimes E)$ and $H^1(\sO(t)\otimes F) \to H^1(\sO(t)\otimes E)$, $i=1,2, t\in
\mathbb Z$. Since the last maps are isomorphisms, any non-zero components of $(g^\vee\otimes 1)(\Psi)$ in $H^1(A_E\otimes F)$
lie only in the summands of type $H^1(\Sigma_i(t)\otimes F)$  and they will be in the kernel of
$ 1\otimes \beta: H^1(\Sigma_i(t) \otimes F) \to H^1(\Sigma_i(t) \otimes E)$.

Consider the commuting diagram:

\[ \begin{CD}
H^0(E^\vee \otimes E\sp{\prime}) \xrightarrow{\delta(\gamma^\vee)} & H^1(L_0^\vee \otimes E\sp{\prime})
&\xrightarrow{g^\vee\otimes 1} &H^1(A_E \otimes E\sp{\prime})\\
& \uparrow 1 \otimes \beta\sp{\prime} &  & \uparrow  1 \otimes \beta\sp{\prime}  \\
& H^1(L_0^\vee\otimes F)  &\xrightarrow{ g^\vee\otimes 1} &H^1(A_E \otimes F).\\
\end{CD}
\]

By corollary \ref{c1}, $(1 \otimes \beta\sp{\prime})(\Psi)$  represents $\gamma'$, hence is non-zero.  By our assumptions
about the kernels of $1\otimes \beta$ and $1\otimes \beta'$ on $H^1_*(\Sigma_i\otimes F)$, $(1 \otimes
\beta\sp{\prime})\circ(g^\vee\otimes 1)(\Psi)=0$.  Thus $(1 \otimes \beta\sp{\prime})(\Psi)$ is the image of some element
$\sigma \in H^0(E^\vee \otimes E\sp{\prime})$, necessarily non-zero.

Consider the map $\sigma: E \to E'$. Since $\sigma$ maps to $\gamma'$ under $\delta(\gamma^\vee)$, it follows that the
pushout of $\gamma$ under $\sigma$ is $\gamma'$,
\[ \begin{CD}
0 \to & E \to &A_E \to &L_0 &\to 0\\
      & \downarrow \sigma & \downarrow \sigma_1 & ||&\\
0 \to & E' \to &A_{E'} \to &L_0 &\to 0.
\end{CD}
\]

The induced map $H^1_*(E) \xrightarrow{\sigma} H^1_*(E')$ fits into the commuting diagram
\[ \begin{CD}
H^0_*(L_0) \to & H^1_*(E) \to 0 \\
||              & \downarrow \sigma \\
H^0_*(L_0) \to & H^1_*(E') \to 0.
\end{CD}
\]
Hence $H^1_*(E) \xrightarrow{\sigma} H^1_*(E')$ must be surjective, and since they are both isomorphic to the module $M$
which is a finite dimensional vector space over the base field, $H^1_*(E) \xrightarrow{\sigma} H^1_*(E')$ is an isomorphism.
\end{proof}

\end{section}

\begin{section}{The diagram of a bundle}
Let $E$ be a vector bundle on $\sQ$. From the previous section, we get a graded $S(Q)$-module $M(E)$, a bundle $F$ associated
to $M(E)$ and an ACM bundle $A_E$ with a push-out diagram

\[
\begin{CD} \Psi:\qquad &0 \to & F &\xrightarrow {}   &L_1&    \xrightarrow {} &L_0& \to 0\\
                 & &  \downarrow{ \beta}&  &\downarrow{ \beta_1}& & \downarrow{ \beta_2}&\\
           \gamma:\qquad &0 \to &E& \xrightarrow {f}  &A_{E}&\xrightarrow {g} &L_0& \to 0.
           \end{CD}\]

Suppose $E = E'\oplus L$ where $L$ is some line bundle of the form $\sO(a),\Sigma_1(b),\Sigma_2(c)$. Then applying Lemma
\ref{liftings} to the projection $\sigma: E\to E'$ we see that $$\gamma = \gamma' \oplus \{0 \to L \xrightarrow {Id_L} L \to
0 \to 0\}.$$ Hence we can ``minimize $\gamma$'' by choosing $E$ to have no ACM summands.

 By Kn\"{o}rrer's theorem \cite{Kn}, the ACM bundle $A_E$ has the form
  $$A_E\cong [\bigoplus_{i\in \mathbb Z} \mu_i \Sigma_1(i)]\oplus\ [\bigoplus_{j\in \mathbb Z} \nu_j
 \Sigma_2(j)]\oplus P_E = S_E \oplus P_E$$ where $P_E$ is free.\\
We  define the following numerical invariants associated to $E$ which determine $S_E$:
$$\text{$\mu=\{\mu_i\}_{i\in \mathbb Z}$ and   $\nu = \{\nu_j\}_{j\in \mathbb Z}$.}$$
Furthermore, let $V$ be a graded vector space with dimensions $\mu$, $W$ with dimensions $\nu$. Then $$A_E\cong S_E \oplus
P_E \cong (V\otimes_k \Sigma_1) \oplus (W\otimes_k \Sigma_2) \oplus P_E.$$

Using the canonical spinor sequences (\ref{spinor}), we get a canonical exact sequence $\Delta$:
$$ \Delta: \qquad 0 \to K_{E} \to L_E \oplus P_E \xrightarrow{} A_E \to 0, $$
 where $K_{E} = (V \otimes \Sigma_1^{-1})\oplus \ (W \otimes \Sigma_2^{-1})$ and
 $L_E = (V \otimes  2\sO)\oplus (W \otimes  2\sO).$

\begin{proposition}\label{diagram} There is a commuting diagram
 $$\begin{array}{ccccccccc}
 & & & \eta & &\Delta&  & & \\
 \\
 & & &0 & &0& & & \\
 & &  &\downarrow & &\downarrow & & &\\
 & & & K_E                &=&K_E   &    &  & \\
 & & &\downarrow\alpha & &\downarrow & & & \\
 & & 0 \to &F\oplus Q_E &\rightarrow & L_E \oplus P_E & \xrightarrow {}& L_0& \to 0\\
 & & &\downarrow\beta & &\downarrow & &\| &\\
 & \gamma:\qquad & 0 \to &E& \xrightarrow{f} & A_E&\xrightarrow{g} & L_0&  \to 0\\
 & & &\downarrow & &\downarrow & & & \\

 & & & 0&          &0& & & \\
 \end{array}$$
where $Q_E$ is free and the induced map $H^1_*(F)\to H^1_*(E)$ is an isomorphism. Furthermore, if $E$ has no ACM summands, then
\begin{enumerate}
\item $H^1(\alpha \otimes
Id_{\Sigma_i}): H_*^1(K_E\otimes \Sigma_i) \to H^1_*(F\otimes \Sigma_i)$ is injective for $i=1,2$. \item the image of
$H^1(\alpha \otimes Id_{\Sigma_1})$ is a graded vector subspace of $H^1_*(F\otimes \Sigma_1) = M_{\Sigma_1}$ isomorphic to
$W(1)$ and consists of $\Sigma_2$-socle elements. \item the image of $H^1(\alpha \otimes Id_{\Sigma_2})$ is a graded vector
subspace of $H^1_*(F\otimes \Sigma_2) = M_{\Sigma_2}$ isomorphic to $V(1)$ and consists of $\Sigma_1$-socle elements.
\end{enumerate}
\end{proposition}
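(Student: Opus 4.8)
The plan is to build the diagram as a pullback and then recognize the upper-left bundle via the uniqueness of minimal presentations. Write $\pi\colon L_E\oplus P_E\to A_E$ for the surjection in $\Delta$ and set $\rho=g\circ\pi\colon L_E\oplus P_E\to L_0$; since $\pi$ and $g$ are surjective so is $\rho$, and I put $G=\ker\rho$. Because $K_E=\ker\pi\subseteq\ker\rho=G$, the inclusion gives $\alpha\colon K_E\hookrightarrow G$, and $\pi$ identifies $G/K_E$ with $\ker g=\im f\cong E$; this produces the left column $\eta\colon 0\to K_E\xrightarrow{\alpha}G\to E\to 0$ and the middle row $0\to G\to L_E\oplus P_E\xrightarrow{\rho}L_0\to 0$, which assemble with $\Delta$ and $\gamma$ into the asserted commuting square. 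It remains to identify $G\cong F\oplus Q_E$ with $Q_E$ free. Taking $H^0_*$ of the middle row, and using that $H^0_*(\pi)$ is onto (as $H^1_*(K_E)=0$), gives $\coker H^0_*(\rho)=\coker H^0_*(g)=M$, so $H^0_*(L_E\oplus P_E)\xrightarrow{H^0_*(\rho)}H^0_*(L_0)\to M\to 0$ is a (non-minimal) free presentation of $M$ with the same $L_0$ as the minimal one. By the splitting property of presentations quoted before the definition of $F_M$, it is the minimal presentation $\Psi$ together with a trivial complex $Q_E\xrightarrow{0}0$, and taking kernels of the sheafified maps yields $G\cong F\oplus Q_E$ with $Q_E$ free. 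The claim that $H^1_*(F)\to H^1_*(E)$ is an isomorphism is then the map induced by $\beta$, already an isomorphism by Corollary \ref{c1} (equivalently, read off $\eta$, since $H^1_*(K_E)=0=H^1_*(Q_E)$).

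Granting the injectivity asserted in (1), statements (2) and (3) are formal computations on $K_E$. On $\pee1\times\pee1$ any free bundle twisted by $\Sigma_i$ is again ACM, since $\sO(a)\otimes\Sigma_i$ has the shape $\sO(a\pm1,a)$ and so has no intermediate cohomology; hence $H^1_*(Q_E\otimes\Sigma_i)=0$, which gives $H^1_*((F\oplus Q_E)\otimes\Sigma_i)=M_{\Sigma_i}$ and explains why the image of $H^1(\alpha\otimes\Sigma_i)$ automatically lands in $M_{\Sigma_i}$. Tensoring $K_E=(V\otimes\Sigma_1^{-1})\oplus(W\otimes\Sigma_2^{-1})$ by $\Sigma_1$ gives $(V\otimes\sO)\oplus(W\otimes\Sigma_2^{-1}\Sigma_1)$; the first summand has $H^1_*=0$, while by K\"unneth $H^1_*(\Sigma_2^{-1}\Sigma_1)=H^1_*(\sO(1,-1))=k(1)$, so $H^1_*(K_E\otimes\Sigma_1)\cong W(1)$, and symmetrically $H^1_*(K_E\otimes\Sigma_2)\cong V(1)$. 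Together with (1) the maps $H^1(\alpha\otimes\Sigma_i)$ are isomorphisms onto their images $W(1)\subseteq M_{\Sigma_1}$ and $V(1)\subseteq M_{\Sigma_2}$. The socle condition is naturality: the $\Sigma_2$-socle map on $H^1_*(F\otimes\Sigma_1)$ is induced by the $\Sigma_2$-sequence (\ref{spinor}) tensored by $\Sigma_1$, and on $K_E$ it factors through $H^1_*(2\,W\otimes\Sigma_1)=0$; commutativity with $\alpha$ then forces the image of $H^1(\alpha\otimes\Sigma_1)$ into the $\Sigma_2$-socle, and symmetrically for the $\Sigma_1$-socle.

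The essential point, and the step I expect to be hardest, is the injectivity in (1), where the hypothesis that $E$ has no ACM summand enters. I would reduce it to a connecting-map computation: from $\eta\otimes\Sigma_i$, injectivity of $H^1(\alpha\otimes\Sigma_i)$ is equivalent to the vanishing of the connecting map $\partial_\eta\colon H^0_*(E\otimes\Sigma_i)\to H^1_*(K_E\otimes\Sigma_i)$, and by naturality across the two columns $\partial_\eta=\partial_\Delta\circ H^0_*(f\otimes\Sigma_i)$, where $\partial_\Delta\colon H^0_*(A_E\otimes\Sigma_i)\to H^1_*(K_E\otimes\Sigma_i)$ is surjective and, computed on $A_E=S_E\oplus P_E$, is exactly the cokernel projection detecting the $\Sigma_2$-spinor summands of $A_E$ not lifted from $L_E$ (resp. the $\Sigma_1$-spinor summands for $i=2$). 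Thus non-injectivity would produce a section $e\in H^0_*(E\otimes\Sigma_i)$ whose image $f(e)$ has nonzero class in $W(1)$, i.e. a summand $\Sigma_2(c)$ of $A_E$ carrying nontrivial connecting data. I would then argue that this forces $g$ to vanish on that summand, so that $\Sigma_2(c)\subseteq\ker g=\im f$; since $f$ is injective, $f^{-1}(\Sigma_2(c))\cong\Sigma_2(c)$ is a subbundle of $E$ on which the composite $E\xrightarrow{f}A_E\twoheadrightarrow\Sigma_2(c)$ is the identity, splitting $\Sigma_2(c)$ off $E$ and contradicting the no-ACM-summand hypothesis. Making precise the link between the nonzero class in $W(1)$ and the vanishing of $g$ on the corresponding spinor summand — that is, extracting an honest split injection from the cohomological non-injectivity — is the delicate part, and it is exactly here that the minimality of $g$ (that $H^0_*(g)$ comes from a minimal presentation) must be combined with the assumption that $E$ has no ACM summands.
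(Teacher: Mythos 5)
Your construction of the diagram and your treatment of (2) and (3) are correct and essentially identical to the paper's: the middle row comes from comparing the free presentation $H^0_*(L_E\oplus P_E)\to H^0_*(L_0)$ of $M$ with the minimal presentation $\Psi$, and (2), (3) follow, granting (1), from the computations $H^1_*(K_E\otimes\Sigma_1)\cong W(1)$, $H^1_*(K_E\otimes\Sigma_2)\cong V(1)$ together with the vanishing of $H^1_*(K_E(1))$. (One small slip: your parenthetical ``equivalently, read off $\eta$'' does not give surjectivity of $H^1_*(F)\to H^1_*(E)$, since $H^2_*(K_E)\neq 0$ in general; your main argument via the push-out and Corollary \ref{c1} is the right one.)

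The genuine gap is exactly where you flagged it: part (1), which is the only place the no-ACM-summand hypothesis is used. You reduce, as the paper does, to showing that $\partial_\eta=\partial_\Delta\circ H^0_*(f\otimes\Sigma_i)$ vanishes, but you leave open ``extracting an honest split injection from the cohomological non-injectivity,'' and the route you propose --- forcing $g$ to vanish on the spinor summand of $A_E$ carrying the nontrivial connecting data --- is not the right mechanism: nothing forces $g$ to kill that particular summand, and at best one could exhibit a different, ``diagonal'' spinor summand of $A_E$ inside $\ker g$ (the image of the sought section), which already presupposes the splitting you are after. The paper's argument avoids $g$ and any appeal to minimality entirely. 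The column $\Delta\otimes\Sigma_1(p)$ is a direct sum of twisted canonical sequences (\ref{spinor}); for the summand coming from $\Sigma_j(q)$, the connecting map $H^0(\Sigma_j(q)\otimes\Sigma_1(p))\to H^1(\Sigma_j^{-1}(q)\otimes\Sigma_1(p))$ is nonzero only when $j=2$ and $p+q=-1$, in which case the sequence reads
$$0\to\sO(0,-2)\to 2\sO(0,-1)\to\sO\to 0$$
and the connecting map is an isomorphism between the one-dimensional spaces $H^0(\sO)$ and $H^1(\sO(0,-2))$. Hence if $s\in H^0(E\otimes\Sigma_1(p))$ has $\partial_\eta(s)\neq 0$, then the component of $(f\otimes 1)(s)$ in one of these trivial summands $\sO\subset A_E\otimes\Sigma_1(p)$ is a nonzero scalar; composing $f\otimes 1$ with the projection onto that summand gives a map $E\otimes\Sigma_1(p)\to\sO$ sending $s$ to a unit, so $s:\sO\to E\otimes\Sigma_1(p)$ is a split injection, and $E$ acquires the ACM summand $\Sigma_1^{-1}(-p)$, a twist of $\Sigma_2$, contradicting the hypothesis. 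This direct splitting of $E$ itself, rather than any statement about $g$ on a summand of $A_E$, is the missing step in your proposal.
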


\begin{proof} Since $H^0_*(L_E \oplus P_E)$ is a free $S(\Q)$-module and surjects onto $H^0_*(A_E)$, we obtain a composite map
$H^0_*(L_E \oplus P_E) \to H^0_*(L_0)$ that gives a free presentation for $M(E)$ and hence can be compared with the
minimal presentation $\Psi$ for $M(E)$.
This gives the second row of the diagram where $Q_E$ is some free bundle. Since the bottom row is a push-out of the second row,
$H^1_*(F)\to H^1_*(E)$ is an isomorphism. The sequence labelled $\eta$ is the pull-back by $f$ of the sequence $\Delta$.

Now suppose $E$ has no ACM summands. We wish to show that $H^0_*(E\otimes \Sigma_i)
\to H_*^1(K_E\otimes \Sigma_i)$ is the zero map.
\par
Indeed, suppose that when $i=1$, a section $s$ of $E\otimes \Sigma_1(p)$ maps to a nonzero element of $H^1(K_E\otimes
\Sigma_1(p))$. Then the image $s'$ of $s$ in $H^0(A_E\otimes \Sigma_1(p))$  maps to a nonzero element of $H^1(K_E\otimes
\Sigma_1(p))$. But the second column is built out of a direct sum of the canonical sequences (\ref{spinor}) (twisted by
integers $q$).  Tensoring such a sequence with $\Sigma_1(p)$ gives $$0 \to \Sigma_j^{-1}(q)\otimes \Sigma_1(p) \to 2 \sO (q)
\otimes \Sigma_1(p) \to \Sigma_j (q) \otimes \Sigma_1(p) \to 0.$$ The induced map from $H^0$ to $H^1$ is nonzero only if
$j=2, p+q=-1$, and in this case, the element $1$ in $H^0( \Sigma_2 \otimes \Sigma_1(-1))=H^0(\sO)$ maps to a nonzero element.

Hence there is a summand of $A_E\otimes \Sigma_1(p)$ of the form $\sO$, such that $s'$ projects to the global section $1$ of
this summand. Thus  $E\otimes \Sigma_1(p)$ has  $\sO$ as a summand, giving $\Sigma_2(p-1)$ as an ACM summand of $E$.

\par
The last statements follow  from the the fact that (for $\Sigma_1$) $$H^1_*(K_E\otimes \Sigma_1) = H^1_*(W\otimes_k
\Sigma_2^{-1}\otimes \Sigma_1) =H^1_*(W(1)\otimes_k \sO(0,-2))=W(1)$$ and from the commutative diagram
\[
\begin{CD}
    K_E\otimes \Sigma_1 &\longrightarrow &F\otimes \Sigma_1 \\
         \downarrow \bmatrix -v\\u \endbmatrix & &\downarrow \bmatrix -v\\u \endbmatrix\\
      2K_E(1)           &\longrightarrow &2F(1)  \\
\end{CD}
\]
where $H^1_*(K_E)=0$.

\end{proof}

\begin{definition}The diagram in Proposition \ref{diagram} will be called a diagram of $E$.
\end{definition}

\begin{proposition}\label{4-term-sequence} For $i=1,2$, there is an exact sequence
$$ 0 \to H^1_*(K_E\otimes \Sigma_i) \xrightarrow{\alpha} H^1_*(F\otimes \Sigma_i)\xrightarrow{\beta} H^1_*(E\otimes \Sigma_i)
\xrightarrow{f} H^1_*(A_E\otimes \Sigma_i) \to 0$$ obtained from the diagram of $E$.
\end{proposition}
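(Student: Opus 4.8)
The plan is to tensor the whole $3\times 3$ diagram of Proposition~\ref{diagram} with $\Sigma_i$ (which preserves exactness of every row and column) and then to assemble the asserted four-term sequence from the long exact cohomology sequences of the left column $\eta$, namely $0\to K_E\xrightarrow{\alpha}F\oplus Q_E\xrightarrow{\beta}E\to 0$, and of the bottom row $\gamma$, splicing the two at $H^1_*(E\otimes\Sigma_i)$.

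First I would record the vanishings that make the bookkeeping collapse. Because $Q_E,P_E,L_E,L_0$ are assembled from copies of $\sO(a)$ and $2\sO$, after tensoring with $\Sigma_i$ they become direct sums of twists $\Sigma_i(a)$, which are ACM; hence $H^1_*(Q_E\otimes\Sigma_i)=H^1_*(P_E\otimes\Sigma_i)=H^1_*(L_E\otimes\Sigma_i)=H^1_*(L_0\otimes\Sigma_i)=0$. In particular $H^1_*((F\oplus Q_E)\otimes\Sigma_i)=H^1_*(F\otimes\Sigma_i)$ and $H^1_*((L_E\oplus P_E)\otimes\Sigma_i)=0$. The long exact sequence of $\eta\otimes\Sigma_i$, combined with the injectivity of $\alpha$ already supplied by Proposition~\ref{diagram}(1) (equivalently, the vanishing there of the connecting map $H^0_*(E\otimes\Sigma_i)\to H^1_*(K_E\otimes\Sigma_i)$), gives exactness of
\[ 0\to H^1_*(K_E\otimes\Sigma_i)\xrightarrow{\alpha}H^1_*(F\otimes\Sigma_i)\xrightarrow{\beta}H^1_*(E\otimes\Sigma_i), \]
so the proposed sequence is exact at its first two terms. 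From the long exact sequence of $\gamma\otimes\Sigma_i$, the vanishing $H^1_*(L_0\otimes\Sigma_i)=0$ shows that $f$ is surjective and that $\ker f$ is the image of the connecting map $\partial_\gamma\colon H^0_*(L_0\otimes\Sigma_i)\to H^1_*(E\otimes\Sigma_i)$; this settles exactness at the last term.

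The remaining---and only delicate---point is exactness at $H^1_*(E\otimes\Sigma_i)$, that is $\im\beta=\ker f$. One inclusion is formal: commutativity of the diagram factors $f\circ\beta$ through $H^1_*((L_E\oplus P_E)\otimes\Sigma_i)=0$, so $\im\beta\subseteq\ker f$. For the reverse inclusion I would regard the top row $0\to F\oplus Q_E\to L_E\oplus P_E\to L_0\to 0$ as mapping to $\gamma$ via $(\beta,\ \text{middle column},\ \mathrm{Id}_{L_0})$, which is a morphism of short exact sequences. Naturality of the connecting homomorphism then gives $\partial_\gamma=\beta\circ\partial_{\mathrm{top}}$, where $\partial_{\mathrm{top}}\colon H^0_*(L_0\otimes\Sigma_i)\to H^1_*(F\otimes\Sigma_i)$ is the connecting map of the top row; hence $\ker f=\im\partial_\gamma=\im(\beta\circ\partial_{\mathrm{top}})\subseteq\im\beta$, which completes the splicing.

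I expect this naturality step to be the crux, since everything else is the formal manipulation of long exact sequences together with the ACM vanishings above. The real input is Proposition~\ref{diagram} itself, which both guarantees the injectivity of $\alpha$ (and thus the leading $0\to$) and identifies the outer terms geometrically; the present statement is essentially the clean four-term repackaging of the data carried by that diagram.
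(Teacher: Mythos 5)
Your proof is correct, and at the one delicate point (exactness at $H^1_*(E\otimes\Sigma_i)$) it takes a genuinely different route from the paper's. You argue purely diagrammatically: $\ker f=\im \partial_\gamma$ from the long exact sequence of $\gamma\otimes\Sigma_i$, the naturality identity $\partial_\gamma=\beta\circ\partial_{\mathrm{mid}}$ for the morphism from the middle row $0\to F\oplus Q_E\to L_E\oplus P_E\to L_0\to 0$ to $\gamma$, and the ACM vanishing $H^1_*((L_E\oplus P_E)\otimes\Sigma_i)=0$ for the opposite inclusion $\im\beta\subseteq\ker f$. The paper instead exploits the $S(\Q)$-module structure: using that $H^0_*(L_0)$ surjects onto $H^1_*(E)$ and that $H^0_*(L_0\otimes\Sigma_1)$ is generated by $s,t$ times $H^0_*(L_0)$, it identifies $\ker f$ with the image of the multiplication map $(s,t):2H^1_*(E)\to H^1_*(E\otimes\Sigma_1)$; the special case $E=F$ (whose $\gamma$-sequence is $\Psi$ itself, with $A_F=L_1$ free, hence $f=0$) shows this multiplication map is onto for $F$, and the isomorphism $H^1_*(F)\cong H^1_*(E)$ induced by $\beta$, being compatible with multiplication, then gives $\im\beta=(s,t)\cdot 2H^1_*(E)=\ker f$. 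Your argument is shorter and uses only the push-out structure of the diagram of Proposition \ref{diagram}; the paper's argument buys an intrinsic description of $\ker f$ as the image of multiplication by the sections of $\Sigma_i$ (equivalently, of $H^1_*(A_E\otimes\Sigma_i)$ as the cokernel of that multiplication), which is module-theoretic information in the spirit of the invariants the paper is constructing. The treatment of the outer terms (injectivity of $\alpha$ via Proposition \ref{diagram}(1), surjectivity of $f$ via freeness of $L_0$) is the same in both, and your explicit recording of the ACM vanishings is exactly what the paper leaves implicit in its opening sentence ``it suffices to verify exactness at $H^1_*(E\otimes\Sigma_1)$.''
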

\begin{proof} It suffices to verify exactness at $H^1_*(E\otimes \Sigma_1)$. The diagram
\[ \begin{CD}
&2H^0_*(L_0) &\rightarrow  &2H^1_*(E) \to 0 \\
 &\downarrow {s,t}    &          &\downarrow {s,t} \\
 &H^0_*(L_0\otimes\Sigma_1)  &\to       &H^1_*(E\otimes\Sigma_1) &\xrightarrow{f} H^1_*(A_E\otimes \Sigma_1)\to 0 \\
 & \downarrow & & \\
 & 0 & &\\
 \end{CD}
 \]
shows that $\ker f$ equals the image of $2H^1_*(E) \xrightarrow{s,t} H^1_*(E\otimes\Sigma_1)$. As a special case, when $E$ is
$F$, it follows that $2H^1_*(F) \xrightarrow{s,t} H^1_*(F\otimes\Sigma_1)$ is onto. Now the diagram
\[ \begin{CD}
& 2H^1_*(F) & \xrightarrow{\cong}& 2H^1_*(E) \\
&\downarrow {s,t}    &          &\downarrow {s,t} \\
& H^1(F\otimes \Sigma_1) & \xrightarrow{\beta}     & H^1_*(E\otimes\Sigma_1)\\
& \downarrow & & \\
 & 0 & &\\
 \end{CD}
 \]
 shows that $\ker f$ is the image of $\beta$.

\end{proof}

The following theorem compares extensions $\eta: 0 \to K_E \xrightarrow{\alpha}F\oplus Q_{E} \xrightarrow{\beta} E \to 0$ and
$\eta': 0 \to K_{E'} \xrightarrow{\alpha'}F'\oplus Q_{E'} \xrightarrow{\beta'} E' \to 0$ for two bundles $E$ and $E'$.

\begin{theorem}\label{eta-sequences} Let $E,E'$ be two  vector bundles on $\Q$ with no ACM summands.
Suppose $M(E)= M(E')$, $K_E = K_{E'}$. In the diagram of proposition \ref{diagram}, suppose for $i=1,2$, $H^1(\alpha \otimes
Id_{\Sigma_i})$ and $H^1(\alpha' \otimes Id_{\Sigma_i})$ have the same image in $H^1_*(F\otimes \Sigma_i)$. Then there is a
homomorphism $\sigma: E \to E'$ such that the extension class $\eta$ is the pull-back by $\sigma$ of the extension $\eta'$
(up to an automorphism of $K_E = K_{E'}$).
\end{theorem}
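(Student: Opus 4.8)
The plan is to reduce the statement to Proposition \ref{gamma-sequences} and then transport the resulting comparison of $\gamma$-sequences up to the $\eta$-sequences through the diagram of Proposition \ref{diagram}. First I would match hypotheses. By the four-term sequence of Proposition \ref{4-term-sequence}, the image of $H^1(\alpha \otimes Id_{\Sigma_i})$ in $H^1_*(F\otimes \Sigma_i)$ equals the kernel of $1\otimes\beta\colon H^1_*(F\otimes \Sigma_i)\to H^1_*(E\otimes \Sigma_i)$, where $\beta\colon F\to E$ is the restriction to $F$ of the middle map of $\eta$, which by Corollary \ref{c1} realizes $\gamma$ as the push-out of $\Psi$. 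Hence the assumption that $H^1(\alpha\otimes Id_{\Sigma_i})$ and $H^1(\alpha'\otimes Id_{\Sigma_i})$ have the same image is exactly the condition $\ker(1\otimes\beta)=\ker(1\otimes\beta')$ on $H^1_*(F\otimes \Sigma_i)$ for $i=1,2$. Since $M(E)=M(E')$ forces the associated bundle $F$, the free bundle $L_0$ and the presentation $\Psi$ to coincide for $E$ and $E'$, Proposition \ref{gamma-sequences} applies and yields a homomorphism $\sigma\colon E\to E'$ inducing $M(E)\cong M(E')$ together with a push-out of $\gamma$ onto $\gamma'$; in particular a lifting $\sigma_1\colon A_E\to A_{E'}$ with $f'\sigma=\sigma_1 f$ and $g'\sigma_1=g$.

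Next I would use that, in the diagram of Proposition \ref{diagram}, $\eta$ is the pull-back of $\Delta$ along $f$ and $\eta'$ the pull-back of $\Delta'$ along $f'$. The middle term $L_E\oplus P_E$ of $\Delta$ is free and $\Ext^1(L_E\oplus P_E,K_{E'})=0$ (a sum of groups $H^1(\Sigma_j^{-1}(t))$, all vanishing on $\Q$), so $\sigma_1$ composed with the surjection $L_E\oplus P_E\to A_E\to A_{E'}$ lifts to a map $\tau\colon L_E\oplus P_E\to L_{E'}\oplus P_{E'}$ of the two $\Delta$-presentations, inducing $\phi\colon K_E\to K_{E'}$ on kernels. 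This morphism of extensions gives $\phi_*\Delta=\sigma_1^*\Delta'$, whence
\[ \sigma^*\eta'=(f'\sigma)^*\Delta'=(\sigma_1 f)^*\Delta'=f^*(\sigma_1^*\Delta')=f^*(\phi_*\Delta)=\phi_*(f^*\Delta)=\phi_*\eta \]
in $\Ext^1(E,K_{E'})$. Using $K_E=K_{E'}$, this already exhibits $\eta$ as $\sigma^*\eta'$ up to the endomorphism $\phi$ of $K_E$, so it remains to prove that $\phi$ is an automorphism.

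The main work is this last point. The identity $\phi_*\eta=\sigma^*\eta'$ is realized by a morphism of extensions $\eta\to\eta'$ whose middle map $\rho\colon F\oplus Q_E\to F'\oplus Q_{E'}$ covers $\sigma$ on the right and $\phi$ on the left; tensoring by $\Sigma_i$ and taking $H^1_*$ gives the commuting square $\rho_*\circ H^1(\alpha\otimes Id_{\Sigma_i})=H^1(\alpha'\otimes Id_{\Sigma_i})\circ H^1(\phi\otimes Id_{\Sigma_i})$, in which the two horizontal maps are injective with the same image by hypothesis. I would first show that the $F$-component $\rho_F\colon F\to F$ of $\rho$ is an automorphism: it induces an isomorphism on $H^1_*(F)=M$ (as $Q_E$ is free and $\sigma$ induces $M(E)\cong M(E')$), and lifting $\rho_F$ through the minimal presentation $\Psi$ (the lift exists since $\Ext^1(L_0,L_1)=0$) produces endomorphisms of the minimal free modules covering an automorphism of $M$; graded Nakayama makes these isomorphisms and the five lemma makes $\rho_F$ an isomorphism. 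Consequently $\rho_*=H^1_*(\rho_F\otimes\Sigma_i)$ is an automorphism of $H^1_*(F\otimes\Sigma_i)$; the displayed square then forces $\rho_*$ to carry the common image onto itself (since it lands inside it and $\rho_*$ is injective on a finite-dimensional space), so $H^1(\phi\otimes Id_{\Sigma_i})$ is an isomorphism on $H^1_*(K_E\otimes\Sigma_i)$ for $i=1,2$.

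Finally, since $K_E=(V\otimes\Sigma_1^{-1})\oplus(W\otimes\Sigma_2^{-1})$ admits no homomorphisms between its two summands, $\phi$ is block diagonal, and the groups $H^1_*(K_E\otimes\Sigma_2)=V(1)$ and $H^1_*(K_E\otimes\Sigma_1)=W(1)$ detect invertibility of the two blocks; hence $\phi$ is an automorphism of $K_E$ and $\eta=(\phi^{-1})_*\sigma^*\eta'$ is the pull-back of $\eta'$ by $\sigma$ up to the automorphism $\phi^{-1}$ of $K_E$. The step I expect to be the main obstacle is the rigidity input that $\rho_F$ is an automorphism of $F$, since this is what upgrades the common-image hypothesis into genuine invertibility of $\phi$ on cohomology, and from there to the bundle level.
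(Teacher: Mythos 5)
Your route is genuinely different from the paper's. The paper never uses Proposition \ref{gamma-sequences} here: it runs the connecting-homomorphism argument directly on the $\eta$/$\Delta$ classes, invoking Lemma \ref{socle} \emph{at the outset} to produce an automorphism $\lambda=\diag(\lambda_1,\lambda_2)$ of $K$ with $H^1(1\otimes\alpha')=H^1(1\otimes(\alpha\circ\lambda))$, and then reads off $\sigma$ from exactness of $H^0(E^\vee\otimes E')\to H^1(E^\vee\otimes K)\to H^1(E^\vee\otimes(F\oplus Q_{E'}))$; since $\lambda$ is an automorphism by construction, nothing remains to be checked. Your steps up to the identity $\sigma^*\eta'=\phi_*\eta$ are correct: the translation of the image hypothesis into $\ker(1\otimes\beta)=\ker(1\otimes\beta')$ via Proposition \ref{4-term-sequence} is right (and harmless whether one uses $\beta$ or its restriction to $F$, as $Q_E$ is free); Proposition \ref{gamma-sequences} then yields $\sigma$ and $\sigma_1$; the lift $\tau$ exists because $\Ext^1$ of a free bundle into $K_{E'}$ vanishes; and the formal computation $\sigma^*\eta'=f^*\sigma_1^*\Delta'=f^*\phi_*\Delta=\phi_*\eta$ is sound. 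The price of this ordering is exactly the a posteriori verification that $\phi$ is an automorphism, which the paper's arrangement avoids.

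That verification is where your proof has a genuine error. The claim that $K_E=(V\otimes\Sigma_1^{-1})\oplus(W\otimes\Sigma_2^{-1})$ admits no homomorphisms between its two summands is false: writing $V\otimes\Sigma_1^{-1}=\bigoplus_i V_i\otimes\Sigma_1^{-1}(i)$ and $W\otimes\Sigma_2^{-1}=\bigoplus_j W_j\otimes\Sigma_2^{-1}(j)$, one has $\Hom(\Sigma_1^{-1}(i),\Sigma_2^{-1}(j))=H^0(\sO(j-i+1,\,j-i-1))\neq 0$ whenever $j\geq i+1$ (for instance $H^0(\sO(2,0))$ is three-dimensional), and symmetrically in the other direction when $i\geq j+1$; nothing in the hypotheses confines $V$ and $W$ to degrees where this vanishes. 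So $\phi$ need not be block diagonal, and moreover $H^1(\phi\otimes Id_{\Sigma_i})$ sees only the degree-preserving (constant) components even inside each block, so it cannot by itself ``detect invertibility of the blocks'' in the naive sense. The conclusion you want is nevertheless true, by a filtration argument: every nonzero component of $\phi$ between line-bundle summands of $K$ either preserves total degree or strictly raises it; the degree-preserving ones are constants and necessarily map $\Sigma_1^{-1}(i)\to\Sigma_1^{-1}(i)$ or $\Sigma_2^{-1}(j)\to\Sigma_2^{-1}(j)$ (since $H^0(\sO(1,-1))=H^0(\sO(-1,1))=0$), and these constant diagonal blocks are exactly what your isomorphisms $H^1(\phi\otimes Id_{\Sigma_2})$ and $H^1(\phi\otimes Id_{\Sigma_1})$ record. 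Hence $\phi=D+N$ with $D$ an isomorphism and $N$ strictly filtration-raising, so $D^{-1}N$ is nilpotent and $\phi$ is an automorphism. This is in substance the fact the paper extracts from Lemma \ref{socle} in the proof of Theorem \ref{uniqueness} (invertibility of $\sigma_{11}\colon S\to S$ there), so your argument can be completed this way; but as written, the block-diagonality step fails.
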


\begin{proof}

 With $K_E = K_{E'} =K$, we have the two commuting diagrams
\[ \begin{CD}
H^0(E^\vee \otimes E) \xrightarrow{\delta(\eta)} & H^1(E^\vee\otimes K) &\xrightarrow{1\otimes \alpha} &H^1(E^\vee\otimes
(F\oplus Q_E))\\
& \uparrow (f)^\vee \otimes 1 &  & \uparrow  (f)^\vee \otimes 1 \\
& H^1(A_E^\vee\otimes K)  &\xrightarrow{1\otimes \alpha} &H^1(A_E^\vee\otimes (F\oplus Q_E))
\end{CD}
\]

and
\[
\begin{CD}
H^0(E^\vee \otimes E') \xrightarrow{\delta(\eta')} & H^1(E^\vee\otimes K) &\xrightarrow{1\otimes \alpha'}
&H^1(E^\vee\otimes (F\oplus Q_{E'}))\\
& \uparrow (f)^\vee \otimes 1 &  & \uparrow  (f)^\vee \otimes 1 \\
& H^1(A_E^\vee\otimes K)  &\xrightarrow{1\otimes \alpha'} &H^1(A_E^\vee\otimes (F\oplus Q_{E'})).
\end{CD}
\]

 Let $K = V\otimes \Sigma_1^{-1} \oplus W\otimes \Sigma_2^{-1}$ for graded vector spaces $V,W$.  By the assumption on
  the images of $H^1_*(K\otimes \Sigma_i)$ in $H^1_*(F\otimes \Sigma_i)$ and using lemma \ref{socle}, we can
 assume there are automorphisms  $\lambda_1,\lambda_2$ of $V,W$ such that if $\lambda = \bmatrix \lambda_1 & 0 \\ 0 &
 \lambda_2 \endbmatrix: K \to K$, then the maps
 $H^1(1\otimes \alpha')$ and $H^1(1 \otimes (\alpha \circ \lambda))$ from $H^1_*(\Sigma_i^{-1}\otimes K)$ to
 $H^1_*(\Sigma_i^{-1}\otimes (F\oplus Q_{E'}))$ are equal. (note that $H^1_*(\Sigma_i^{-1}\otimes
 Q_{E'})=0$.)
 Since $A_E^\vee$ is a direct sum of
 terms of the form $\sO(t), \Sigma_i^{-1}(t)$, it follows that the maps $H^1(1\otimes \alpha')$ and $H^1(1 \otimes
 (\alpha \circ \lambda))$ from $H^1_*(A_E^\vee\otimes K)$ to $H^1_*(A_E^\vee\otimes (F\oplus Q_{E'}))$ are equal.

 In the first diagram, the element $Id_E \in H^0(E^\vee \otimes E)$ maps to $\eta \in H^1(E^\vee\otimes K)$. Hence
 $(1\otimes \alpha)\circ ((f)^\vee \otimes  1)(\Delta)=0$. Thus also $((f)^\vee \otimes 1)\circ (1\otimes \alpha)(\Delta)=0$.

  Hence $((f)^\vee \otimes 1)\circ (1\otimes \alpha')
 (1\otimes \lambda)^{-1}(\Delta)=0$
 and thus  $(1\otimes \alpha')\circ (f)^\vee \otimes 1)((1\otimes\lambda)^{-1}(\Delta)=0$. $(1\otimes \lambda)^{-1}(\Delta)$
 is non-zero, and
 $(f)^\vee \otimes 1$ is injective on
 $H^1(A_E^\vee\otimes K)$, hence there is a non-zero $\sigma \in H^0(E^\vee \otimes
 E')$ with the property that the pull-back by $\sigma$ of the sequence $\eta'$ is $(1\otimes\lambda)^{-1}(\eta)$. Thus
\[
\begin{CD}
0 \to &K \xrightarrow{\alpha'}&F\oplus Q_{E'} \xrightarrow{\beta'} &E' \to 0 \\
    &\uparrow \lambda                         & \uparrow\tilde\sigma                  & \uparrow\sigma \\
0 \to &K \xrightarrow{\alpha}&F\oplus Q_{E} \xrightarrow{\beta} &E \to 0.
\end{CD}
\]

\end{proof}

\end{section}

\begin{section} {Invariants of a bundle}

We have seen that a vector bundle $E$ on $\Q$ with no ACM summands determines the following invariants.
\begin{itemize}
    \item A graded $S(\Q)$-module $M=H^1_*(E)$ of finite length, hence an associated bundle $F$ and associated
    modules $M_{\Sigma_1}$, $M_{\Sigma_2}$.
    \item Finitely supported sequences $\mu = \{\mu_i\}_{i\in \mathbb Z}$, $\nu=\{\nu_j\}_{j\in \mathbb Z}$ of
    positive integers (determined by $A_E$). Hence graded vector spaces $V,W$ with graded dimensions determined
    by $\mu,\nu$.
    \item In the module $M_{\Sigma_1}$, a graded subspace of $\Sigma_2$ -socle elements
    isomorphic to $W(1)$.
    \item In the module $M_{\Sigma_2}$, a graded subspace of $\Sigma_1$ -socle elements
    isomorphic to $V(1)$.
\end{itemize}

Any bundle in the stable equivalence class of $E$ determines the same data as $E$. Replacing $E$ by $E(e)$ changes this data
by a corresponding shift in grading.

\begin{lemma} Given an isomorphism $\phi: M \to M'$, of graded $S(\Q)$-modules of finite length. There are well-defined induced
isomorphisms $M_{\Sigma_i}\cong M_{\Sigma_i}'$.
\end{lemma}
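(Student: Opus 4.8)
The plan is to lift $\phi$ to the minimal free presentations, pass to the associated bundles, and then verify that tensoring with $\Sigma_i$ and taking $H^1_*$ yields a map that does not depend on the lift. Sheafifying the minimal presentations we have short exact sequences $\Psi: 0 \to F_M \to L_1 \xrightarrow{\psi} L_0 \to 0$ and $\Psi': 0 \to F_{M'} \to L_1' \xrightarrow{\psi'} L_0' \to 0$, with $H^1_*(F_M)=M$, $H^1_*(F_{M'})=M'$. Since the modules $H^0_*(L_1), H^0_*(L_0)$ are free, hence projective, the comparison theorem for resolutions lets me lift $\phi$ to a map of module presentations $(\phi_1,\phi_0)$; sheafifying and passing to kernels produces a morphism of short exact sequences $\Psi \to \Psi'$, in particular a bundle map $\bar\phi: F_M \to F_{M'}$. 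Tensoring with $\Sigma_i$ and applying $H^1_*$ then defines a candidate $\phi_{\Sigma_i} := H^1_*(\bar\phi \otimes \Sigma_i): M_{\Sigma_i} \to M_{\Sigma_i}'$.

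The crux, which I expect to be the only real obstacle, is showing $\phi_{\Sigma_i}$ is independent of the chosen lift. Given a second lift $(\phi_1', \phi_0')$ also inducing $\phi$, the difference $\phi_0 - \phi_0'$ carries $H^0_*(L_0)$ into $\ker(H^0_*(L_0') \to M') = \mathrm{im}\,\psi'$, so projectivity of $H^0_*(L_0)$ yields a homotopy $h_0: L_0 \to L_1'$ with $\psi' h_0 = \phi_0 - \phi_0'$. A direct check using $\psi'\phi_1 = \phi_0\psi$ shows that $\phi_1 - \phi_1' - h_0\psi$ has image in $\ker\psi' = F_{M'}$, so it factors as $\iota' h_1$ for some $h_1: L_1 \to F_{M'}$, where $\iota'$ is the inclusion $F_{M'}\hookrightarrow L_1'$. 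Writing $\iota: F_M\hookrightarrow L_1$ and using $\psi\iota=0$, I would then compute $\iota'(\bar\phi - \bar\phi') = (\phi_1-\phi_1')\iota = \iota' h_1 \iota$, whence $\bar\phi - \bar\phi' = h_1\iota$ factors through the free bundle $L_1$.

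Finally, tensoring the factorization $\bar\phi-\bar\phi' = h_1 \circ \iota$ with $\Sigma_i$ and applying $H^1_*$ shows that $H^1_*((\bar\phi-\bar\phi')\otimes\Sigma_i)$ factors through $H^1_*(L_1\otimes\Sigma_i)$. Since $L_1$ is free, $L_1\otimes\Sigma_i$ is a direct sum of twists $\Sigma_i(a)$, each of which is ACM with vanishing $H^1_*$; hence $H^1_*(L_1\otimes\Sigma_i)=0$ and the two lifts induce the same map, so $\phi_{\Sigma_i}$ is well defined. To conclude that it is an isomorphism I would invoke functoriality of the construction: $Id_M$ is lifted by the identity and so induces $Id$ on $M_{\Sigma_i}$, while a composite of lifts is a lift of the composite, so $(\psi\circ\phi)_{\Sigma_i}=\psi_{\Sigma_i}\circ\phi_{\Sigma_i}$. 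Applying this to $\phi$ and $\phi^{-1}$ shows $(\phi^{-1})_{\Sigma_i}$ is a two-sided inverse of $\phi_{\Sigma_i}$, giving the desired induced isomorphisms $M_{\Sigma_i}\cong M_{\Sigma_i}'$.
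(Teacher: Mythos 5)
Your proof is correct and follows essentially the same route as the paper's: lift $\phi$ to a map of the minimal presentations, observe that the resulting map $F_M \to F_{M'}$ is unique up to a term factoring through $L_1$, and conclude well-definedness of the induced map on $H^1_*(\,\cdot\,\otimes\Sigma_i)$ from $H^1_*(L_1\otimes\Sigma_i)=0$. The only cosmetic difference is at the end: the paper asserts (via minimality) that the lifted map $F_M\to F_{M'}$ is itself a bundle isomorphism, while you instead deduce invertibility of $\phi_{\Sigma_i}$ by functoriality applied to $\phi^{-1}$; both are valid.
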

\begin{proof} Indeed, given the map $\phi: M \to M'$, there is an induced map from the sequence $\Psi: 0 \to F \to L_1 \to L_0
\to 0$ to the corresponding sequence $\Psi': 0 \to F' \to L_1' \to L_0' \to 0$. The map $F\to F'$ is an isomorphism, unique
up to a term that factors through $L_1$. Hence the induced map $H^1_*(F\otimes \Sigma_i) \to H^1_*(F'\otimes \Sigma_i)$ is
well-determined by $\phi$.
\end{proof}

\begin{theorem}\label{uniqueness}(\textbf{Uniqueness})\
Given $E,E'$ two bundles on $\Q$ without ACM summands, with invariants $M, W(1)\subset M_{\Sigma_1}, V(1)\subset
M_{\Sigma_2}$ and $M', W'(1)\subset M_{\Sigma_1}', V'(1)\subset M_{\Sigma_2}'$.
 Suppose $\exists \phi: M \cong M'$, such that the induced isomorphisms $M_{\Sigma_i}\cong
M_{\Sigma_i}'$ carry $V(1)$ to $V'(1)$ and $W(1)$ to $W'(1)$. Then $E$ and $E'$ are isomorphic.
\end{theorem}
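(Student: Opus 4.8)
The plan is to recognize the hypotheses as precisely those of Proposition \ref{gamma-sequences}, use it to produce a morphism $\sigma\colon E\to E'$ inducing an isomorphism on $M$, and then promote $\sigma$ to an isomorphism of bundles by running the construction symmetrically and invoking the local structure of the endomorphism rings. First I would use $\phi$ to work over a common associated bundle: since $\phi\colon M\to M'$ is an isomorphism, the preceding lemma identifies $F\cong F'$ and $M_{\Sigma_i}\cong M_{\Sigma_i}'$, so I may regard $E$ and $E'$ as sitting over one and the same $F$, with a common minimal presentation $\Psi$ and a common $L_0$.

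The essential observation is that the distinguished subspaces are kernels of socle maps. By Proposition \ref{diagram} the subspace $W(1)\subset M_{\Sigma_1}$ is the image of $H^1(\alpha\otimes\mathrm{Id}_{\Sigma_1})$, and by Proposition \ref{4-term-sequence} this image is exactly the kernel of $1\otimes\beta\colon H^1_*(\Sigma_1\otimes F)\to H^1_*(\Sigma_1\otimes E)$; symmetrically $V(1)\subset M_{\Sigma_2}$ is the kernel of $1\otimes\beta$ on $H^1_*(\Sigma_2\otimes F)$, and the same descriptions hold for $E'$ with $\beta'$. Consequently the hypothesis that $\phi$ carries $W(1)\mapsto W'(1)$ and $V(1)\mapsto V'(1)$ translates, after the identification $F\cong F'$, into the equality $\ker(1\otimes\beta)=\ker(1\otimes\beta')$ inside $H^1_*(\Sigma_i\otimes F)$ for $i=1,2$, which is exactly what Proposition \ref{gamma-sequences} requires.

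Applying that proposition yields $\sigma\colon E\to E'$ for which $\gamma'$ is the push-out of $\gamma$ and, crucially, which induces an isomorphism $\sigma_*\colon M(E)\to M(E')$. As the set-up is symmetric in the two bundles (replace $\phi$ by $\phi^{-1}$), the same proposition produces $\tau\colon E'\to E$ with $\tau_*$ an isomorphism. I then form $\theta=\tau\sigma\in\End(E)$ and $\theta'=\sigma\tau\in\End(E')$; by functoriality $\theta_*=\tau_*\sigma_*$ and $\theta'_*=\sigma_*\tau_*$ are automorphisms of $M$ and of $M'$ respectively.

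The crux, and the step I expect to be the main obstacle, is to upgrade ``$\theta_*$ is an automorphism'' to ``$\theta$ is an automorphism of $E$'', and likewise for $\theta'$. Here the hypothesis that $E$ has no ACM summands is indispensable: by Kn\"orrer's theorem an indecomposable ACM bundle on $\Q$ is a twist of $\sO$ or of a spinor bundle, so in a Krull--Schmidt decomposition $E=\bigoplus_a E_a$ every indecomposable summand has $H^1_*(E_a)\neq 0$ and hence contributes a nonzero block to $M$. Since $\End(E)$ is a finite-dimensional $k$-algebra whose radical is nilpotent and each $\End(E_a)$ is local, an endomorphism is invertible if and only if its image in the semisimple quotient $\End(E)/\mathrm{rad}$ is invertible; because the radical acts nilpotently on $M$ and, by Nakayama, each summand $H^1_*(E_a)$ survives in $M/\mathrm{rad}\cdot M$, invertibility of $\theta$ can be tested on $M$, so that $\theta$ is invertible precisely when $\theta_*$ is. The delicate point to verify carefully is exactly this faithfulness of $M$ on the semisimple part. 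Granting it, $\theta=\tau\sigma$ and $\theta'=\sigma\tau$ are automorphisms, so $\sigma$ is at once split injective and split surjective, hence an isomorphism $E\cong E'$.
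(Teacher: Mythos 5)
Your proposal is correct, but it takes a genuinely different route from the paper, in both halves of the argument. Your opening observation --- that Proposition \ref{4-term-sequence} identifies the invariant subspaces $W(1),V(1)$ (the images of $H^1(\alpha\otimes \mathrm{Id}_{\Sigma_i})$) with the kernels of $1\otimes\beta$ on $H^1_*(F\otimes\Sigma_i)$, so that the hypothesis of the theorem becomes verbatim the kernel hypothesis of Proposition \ref{gamma-sequences} --- is a real shortcut that the paper never makes: the paper instead normalizes $\alpha'$ by $\phi^{-1}$, passes through Theorem \ref{eta-sequences} to compare $\eta$-sequences, deduces that $\sigma_1\colon H^1_*(A_E\otimes\Sigma_i)\to H^1_*(A_{E'}\otimes\Sigma_i)$ is an isomorphism, and only then invokes Proposition \ref{gamma-sequences}, gluing the two conclusions together by a Zariski-openness argument on $\Hom(E,E')$. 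The endgames are entirely different as well: the paper promotes $\sigma$ to an isomorphism by a concrete analysis of the induced map $A_E=S\oplus P_E\to A_{E'}=S\oplus P_{E'}$ (Lemma \ref{socle} makes $\sigma_{11}$ an automorphism of $S$, the absence of free summands of $E'$ makes $\sigma_{22}$ surjective on sections, hence $\sigma$ is surjective as a sheaf map, and a rank comparison by symmetry finishes), whereas you exploit the symmetry of the hypothesis at the outset to get $\tau\colon E'\to E$ and reduce everything to the algebraic lemma that, for a bundle without ACM summands, an endomorphism inducing an automorphism of $H^1_*$ is an automorphism. That lemma is true, and the ``delicate point'' you flag does close cleanly: if $e_a$ is the idempotent of $\mathrm{End}(E)$ projecting onto an indecomposable summand $E_a$, its image in $\mathrm{End}(M)$ is the idempotent projection onto $M_a=H^1_*(E_a)$; the radical of $\mathrm{End}(E)$ maps onto a nilpotent ideal of the image algebra, and since $M_a=\bar e_a M_a$, Nakayama (or simply the fact that a nonzero idempotent cannot lie in a nilpotent ideal) shows $\bar e_a$ survives in the semisimple quotient unless $M_a=0$, which Kn\"orrer's theorem plus the no-ACM-summand hypothesis excludes; hence no simple factor of $\mathrm{End}(E)/\mathrm{rad}$ dies, and invertibility of $\theta$ can be tested on $M$. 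The trade-off: the paper's proof stays entirely inside its own machinery and remains concrete, while yours is shorter and avoids both the openness argument and the block-matrix analysis of $\sigma_1$, at the price of importing Krull--Schmidt for bundles on $\Q$ (finite-dimensional, local endomorphism rings of indecomposables) and standard facts about finite-dimensional algebras, none of which the paper otherwise needs.
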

\begin{proof}
We may assume that $M=M'$ and that there is an automorphism $\phi$ of $F$ which carries $V(1)$ to $V'(1)$ and $W(1)$ to
$W'(1)$. Since the graded vector spaces $V,W$ are isomorphic to $V',W'$, we may assume that $K_E=K_{E'}=K$. In the diagram of
proposition \ref{diagram} for $E'$, we may replace $\alpha'$ by $\phi^{-1}\circ \alpha'$ \textit{etc.} and assume that
$\alpha$ and $\alpha'$ give the same images in $H^1_*(F\otimes \Sigma_i)$. By applying Theorem \ref{eta-sequences}, there
exists $\sigma: E \to E'$ and $\lambda: K \cong K$ such that there is a commutative diagram
\[
\begin{CD}
0 \to &K \xrightarrow{\alpha'}&F\oplus Q_{E'} \xrightarrow{\beta'} &E' \to 0 \\
    &\uparrow \lambda                         & \uparrow\tilde\sigma                  & \uparrow\sigma \\
0 \to &K \xrightarrow{\alpha}&F\oplus Q_{E} \xrightarrow{\beta} &E \to 0.
\end{CD}
\]
This gives a diagram of cohomology
\[\begin{CD}
0 \to &H^1_*(K\otimes \Sigma_i) \xrightarrow{\alpha'}&H^1_*(F\otimes \Sigma_i) \xrightarrow{\beta'}
&H^1_*(E'\otimes \Sigma_i) \to &H^2_*(K\otimes \Sigma_i)  \\
    &\uparrow\lambda                         & \uparrow\tilde\sigma                  & \uparrow\sigma    &\uparrow\lambda\\
0 \to &H^1_*(K\otimes \Sigma_i) \xrightarrow{\alpha}&H^1_*(F\otimes \Sigma_i) \xrightarrow{\beta}
&H^1_*(E\otimes \Sigma_i) \to &H^2_*(K\otimes \Sigma_i)  \\
\end{CD}
\]
where since $\lambda: H^2_*(K\otimes \Sigma_i) \to H^2_*(K\otimes \Sigma_i)$ is an isomorphism, we get $\coker \beta
\hookrightarrow \coker \beta'$.

By Lemma \ref{liftings}, $\sigma$ also induces
\[
\begin{CD} \gamma':\qquad  &0 \to & E' &\xrightarrow {f'}   &A_{E'} &    \xrightarrow {g'} &L_0& \to 0\\
                 & &  \uparrow{ \sigma}&  &\uparrow{ \sigma_1}& & \uparrow{ \sigma_2}&\\
           \gamma':\qquad &0 \to &E& \xrightarrow {f}  &A_{E}&\xrightarrow {g} &L_0& \to 0
           \end{CD}\]

By Lemma \ref{4-term-sequence}, we get
\[\begin{CD}
0 \to &H^1_*(K\otimes \Sigma_i) \xrightarrow{\alpha'}&H^1_*(F\otimes \Sigma_i) \xrightarrow{\beta'}
&H^1_*(E'\otimes \Sigma_i) \to &H^1_*(A_{E'}\otimes \Sigma_i) \to 0 \\
    &\uparrow\lambda                         & \uparrow\tilde\sigma                  & \uparrow\sigma    &\uparrow\sigma_1\\
0 \to &H^1_*(K\otimes \Sigma_i) \xrightarrow{\alpha}&H^1_*(F\otimes \Sigma_i) \xrightarrow{\beta}
&H^1_*(E\otimes \Sigma_i) \to &H^1_*(A_E\otimes \Sigma_i) \to 0 \\
\end{CD}
\]

Hence, $\coker \beta$ and $\coker \beta'$ can be identified with $H^1_*(A_E\otimes \Sigma_i)=H^1_*(A_{E'}\otimes \Sigma_i)$.
Therefore,
$\sigma_1:H^1_*(A_E\otimes \Sigma_i) \to H^1_*(A_{E'}\otimes \Sigma_i)$ is an inclusion of vector spaces of the same rank,
hence is an isomorphism.

Thus there exists $\sigma: E \to E'$ such that the induced map $\sigma_1:H^1_*(A_E\otimes \Sigma_i) \to H^1_*(A_{E'}\otimes
\Sigma_i)$ described in Lemma \ref{liftings} is an isomorphism for each $i$.  Again, by Theorem \ref{gamma-sequences}, there
exists $\sigma: E \to E'$ such that $\sigma$ induces $M(E)\cong M(E')$ (and $\gamma'$ is
 the push-out of $\gamma$ by $\sigma$). By the Zariski openness of these conditions, the general $\sigma: E \to E'$ has the
 property that in the induced diagram
 \[
\begin{CD} \gamma':\qquad  &0 \to & E' &\xrightarrow {f'}   &A_{E'} &    \xrightarrow {g'} &L_0& \to 0\\
                 & &  \uparrow{ \sigma}&  &\uparrow{ \sigma_1}& & \uparrow{ \sigma_2}&\\
           \gamma':\qquad &0 \to &E& \xrightarrow {f}  &A_{E}&\xrightarrow {g} &L_0& \to 0,
           \end{CD}\]
$\sigma_2$ is an isomorphism, and $\sigma_1:H^1_*(A_E\otimes \Sigma_i) \cong H^1_*(A_{E'}\otimes \Sigma_i)$ for $i=1,2$.\\
The map $\sigma_1 : A_E= S \oplus P_E \to A_{E'}= S\oplus P_{E'}$ can be written as $\bmatrix \sigma_{11} & \sigma_{12}\\
\sigma_{21} & \sigma_{22}\endbmatrix$. Then $\sigma_{11}$ has the property that $H^1(\sigma_{11}): H^1_*(S\otimes \Sigma_i)
\to H^1_*(S\otimes \Sigma_i)$ is an isomorphism. Lemma \ref{socle} shows that $\sigma_{11}$ is an isomorphism from $S\to S$.

Now we can apply automorphisms to $A_E, A_{E'}$ and assume that $\sigma_1 = \bmatrix 1_S & 0\\ 0 & \sigma_{22}\endbmatrix$.
The bundle $E'$ has no free summands. Hence any section $s$ of $E'$ maps to $(a,b)\in S \oplus P_{E'}$ where $b$ is a column
of polynomials where no entry in the column is a non-zero scalar. Let $e$ be a minimal generator for $H^0_*(P_{E'})$ (for
example $[1,0,\dots,0]^\vee$). From the commuting diagram of long exact sequences, we see that there is a section $s$ of $E'$
such that $(-a, e-b)$ is in the image of $\sigma_1$. Hence $e-b$ is in the image of $\sigma_{22}$, which says that
$\sigma_{22}$ is surjective on global sections.

In particular, now $\sigma: E\to E'$ is also surjective as a map of sheaves. Thus $E$ has rank not less than that of $E'$.
The discussion can be reversed to show that $E,E'$ have the same rank, and so $\sigma: E\to E'$ is really an isomorphism.
\end{proof}

We finally ask the question: For given data of invariants, is there a vector bundle $E$ on $\Q$ with those invariants?

\begin{theorem}\label{existence} (\textbf{Existence})\ Let $M$ be a graded $S(\Q)$-module of finite length. Let $\mu$ and
$\nu$ be two sequences of positive integers with finite support, $V,W$ the corresponding graded vector spaces. Assume that
$M_{\Sigma_1}$ has a graded vector subspace isomorphic to $W(1)$ consisting of $\Sigma_2$-socle elements and
$M_{\Sigma_2}$ has a graded
vector subspace isomorphic to $V(1)$ of $\Sigma_1$-socle elements. Then there exists a bundle $E$ with no ACM summands on
$\Q$ with these invariants.
\end{theorem}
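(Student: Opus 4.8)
The plan is to reverse the construction from the Uniqueness theorem: given the combinatorial data, I would build a bundle $E$ by prescribing its $\eta$-sequence and then verify that the resulting middle term is locally free with no ACM summands and has exactly the stated invariants. Concretely, starting from $M$, form its associated bundle $F$ via the sheafified minimal presentation $\Psi: 0 \to F \to L_1 \to L_0 \to 0$, set $K = (V\otimes \Sigma_1^{-1})\oplus (W\otimes \Sigma_2^{-1})$, and use Lemma \ref{socle} to produce a homomorphism $\alpha_0: K \to F$ whose induced map $H^1_*(\alpha_0 \otimes Id_{\Sigma_i})$ has image exactly the prescribed socle subspaces $W(1)\subset M_{\Sigma_1}$ and $V(1)\subset M_{\Sigma_2}$. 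The first task is to promote $\alpha_0$ to an honest monomorphism of bundles $\alpha: K \to F\oplus Q_E$ with $Q_E$ free, so that the cokernel $E$ is locally free.

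First I would arrange local freeness of the cokernel. The map $\alpha_0: K \to F$ need not be a subbundle inclusion, so I would stabilize by a free bundle $Q_E$, defining $\alpha = (\alpha_0, q): K \to F\oplus Q_E$ where $q: K \to Q_E$ is chosen so that $\alpha$ is a nowhere-vanishing bundle map (a generic choice of $q$ into a sufficiently positive $Q_E$ works since $K$ has rank equal to $\dim V + \dim W$ and we need only avoid a positive-codimension degeneracy locus on the surface $\Q$). This yields a short exact sequence $\eta: 0 \to K \xrightarrow{\alpha} F\oplus Q_E \xrightarrow{\beta} E \to 0$ with $E$ locally free. Because $Q_E$ is free, $H^1_*(Q_E\otimes \Sigma_i)=0$ and $H^1_*(\alpha_0\otimes Id_{\Sigma_i})$ is unchanged, so the socle data is preserved.

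Next I would compute the invariants of $E$ from the long exact cohomology sequence of $\eta$ tensored by $\sO$ and by $\Sigma_i$. Tensoring by $\sO$ and using $H^1_*(K)=0$ together with $H^1_*(\alpha): H^1_*(F)\to H^1_*(F)$ being the identity gives $H^1_*(E)\cong H^1_*(F)=M$, so $M(E)=M$ as required. Tensoring by $\Sigma_i$ gives the four-term sequence of Proposition \ref{4-term-sequence}; the injectivity of $H^1_*(\alpha\otimes Id_{\Sigma_i})$ forces $H^1_*(A_E\otimes \Sigma_i)\cong \coker(\beta) = H^1_*(F\otimes \Sigma_i)/\im(\alpha)$, and by construction $\im(\alpha)$ is exactly $W(1)$ (resp.\ $V(1)$). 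Reading off $A_E$ via Kn\"orrer's theorem then recovers the prescribed $\mu,\nu$, so $V,W$ and the socle subspaces come out correctly.

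The main obstacle is ensuring that the constructed $E$ has \emph{no} ACM summands, since the whole theory is stated only for such bundles, and a careless construction could introduce spurious line-bundle summands of the form $\sO(a), \Sigma_1(b), \Sigma_2(c)$. The key point, which is the converse direction of the argument in Proposition \ref{diagram}, is that an ACM summand of $E$ would force, via the map $H^0_*(E\otimes \Sigma_i)\to H^1_*(K_E\otimes \Sigma_i)$, a splitting of $\eta$ off a canonical spinor sequence; I would rule this out by checking that $\alpha$ hits every $\Sigma_2$-socle (resp.\ $\Sigma_1$-socle) direction nondegenerately, which is exactly guaranteed by requiring $H^1_*(\alpha\otimes Id_{\Sigma_i})$ to be injective with the prescribed image. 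Concretely, I expect to show that any attempted projection of $E$ onto $\sO(a), \Sigma_1(b),$ or $\Sigma_2(c)$ would contradict either the injectivity of $\alpha$ on cohomology or the minimality of the presentation $\Psi$ of $M$; after discarding any such summands one may, as in the remark following Proposition \ref{diagram}, pass to a minimal $E$ without changing the data, completing the existence proof.
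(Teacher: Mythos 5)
Your opening steps (building $F$ from the minimal presentation and using Lemma \ref{socle} to produce $\alpha_0\colon K=(V\otimes\Sigma_1^{-1})\oplus(W\otimes\Sigma_2^{-1})\to F$ with the prescribed socle images) agree with the paper, but the step where you force the cokernel to be locally free is where the argument breaks down. A generic $q\colon K\to Q_E$ into a sufficiently positive free bundle does make $\alpha=(\alpha_0,q)$ injective on fibers, so $E=\coker\alpha$ is a bundle; however, local freeness is not the only requirement at this point. From the long exact sequence of $\eta$, the cokernel of $M=H^1_*(F\oplus Q_E)\to H^1_*(E)$ is $\ker[H^2_*(K)\to H^2_*(F\oplus Q_E)]$, and $H^2_*(K)\neq 0$; so $M(E)=M$ holds if and only if $H^2_*(\alpha)$ is injective, equivalently (Serre duality) $H^0_*(\alpha^\vee)\colon H^0_*(F^\vee\oplus Q_E^\vee)\to H^0_*(K^\vee)$ is surjective. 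A generic $q$ into a very positive $Q_E$ makes this \emph{fail} rather than hold: the generators of $H^0_*(Q_E^\vee)$ then sit in high degrees and cannot hit the low-degree minimal generators of $H^0_*(K^\vee)$, so unless the module $C=\coker H^0_*(\alpha_0^\vee)$ happens to vanish, your $E$ has $H^1_*(E)\supsetneq M$. This is exactly why the paper runs the construction dually: it chooses $L'$ so that $F^\vee\oplus L'^\vee\to K^\vee$ is surjective on \emph{all} global sections (by covering $C$), which at once gives sheaf surjectivity (hence local freeness of $E$, no genericity needed) and $M(E)=M$ via Serre duality. The degrees of $Q_E$ are dictated by $C$; they cannot be taken ``sufficiently positive.''

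The second gap is circularity in identifying the invariants and excluding ACM summands. Proposition \ref{4-term-sequence} and Proposition \ref{diagram} are statements about the canonical diagram of $E$, in which $K_E$ is defined from the Kn\"orrer decomposition of $A_E$; you may not apply them to your constructed sequence until you know your $K$ coincides with $K_E$, i.e. until you know $A_E\cong (V\otimes\Sigma_1)\oplus(W\otimes\Sigma_2)\oplus(\mathrm{free})$ --- which is precisely what is to be proved. (Also, from your sequence $\coker\beta$ is not $H^1_*(F\otimes\Sigma_i)/\im(\alpha)$, which is only the \emph{image} of $\beta$; $\coker\beta$ embeds in $H^2_*(K\otimes\Sigma_i)$.) The paper closes this loop concretely: the global-section surjectivity lets it split $[L_1\oplus L']^\vee\to K^\vee$ into minimal spinor surjections plus a free part, so the middle column of the diagram has quotient literally $(V\otimes\Sigma_1)\oplus(W\otimes\Sigma_2)\oplus P$, and then Lemma \ref{liftings}(2) identifies the bottom row with the $\gamma$-sequence of $E$, forcing $\mu_E=\mu$, $\nu_E=\nu$. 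Finally, injectivity of $H^1(\alpha\otimes Id_{\Sigma_i})$ does not by itself exclude ACM summands: the paper proves only the implication [no ACM summands $\Rightarrow$ injectivity], and a free summand $\sO(a)$ is completely invisible to these cohomology maps. Your fallback of discarding summands is harmless for free ones but not for spinor summands, since removing $\Sigma_i(b)$ from $E$ changes $\mu$ or $\nu$; so ruling out spinor summands requires the explicit identification of the $\gamma$-sequence, not a genericity or minimality appeal.
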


\begin{proof}Let $F$ be the bundle associated to $M$, with $\Psi: 0 \to F \to L_1 \xrightarrow{\phi} L_0 \to 0$ given by a minimal
presentation of $M$. Consider the sequence
$$0\to F\otimes\Sigma_1(-1)\xrightarrow{[-v,u]^{\vee}} 2F \rightarrow F\otimes\Sigma_2\rightarrow 0$$
In the cohomology sequence $  H^0_*(F\otimes\Sigma_2) \to H^1_*(F\otimes\Sigma_1(-1)) \xrightarrow{[-v,u]^{\vee}} H^1_*(2F) $
obtained from this, the graded vector subspace $W$ is annihilated by $[-v,u]^{\vee}$, hence there is a subspace in
$H^0_*(F\otimes\Sigma_2)$ (not necessarily unique) that maps isomorphically to $W$. By abuse of notation, we call this also
as $W$. We get a map $W \otimes_k \sO \xrightarrow{\alpha_2} F\otimes\Sigma_2$.  There is an induced map $W \otimes \Sigma_2^{-2}
\xrightarrow{\alpha_2} F\otimes\Sigma_1(-1)$ which gives, on the level of $H^1_*$, the subspace $W(1)$ of $M_{\Sigma_1}$ by
Lemma \ref{socle}.

Likewise, we get a map $V \otimes_k \sO \xrightarrow{\alpha_1} F\otimes\Sigma_1$. Together, they give a map
$$ (V\otimes \Sigma_1^{-1}) \oplus  (W\otimes \Sigma_2^{-1}) \xrightarrow{(\alpha_1, \alpha_2)}  F.$$

To find a vector bundle as the quotient, we will add an $L'$ to $F$. Specifically, consider the dual map $ F^{\vee}
\xrightarrow{[\alpha_1, \alpha_2]^{\vee}}  [(V\otimes \Sigma_1^{-1}) \oplus  (W\otimes \Sigma_2^{-1})]^{\vee}$. We can find a free
$S(\Q)$-module surjecting onto the graded module $C=\text{coker} H^0_*([\alpha, \beta]^{\vee})$ and then lift the surjection
to get a map to $H^0_*([(V\otimes \Sigma_1^{-1}) \oplus  (W\otimes \Sigma_2^{-1})]^{\vee})$. If ${L'}^{\vee}$ is the free bundle
obtained by sheafifying this free module, we get a surjection of vector bundles
$$ F^{\vee}\oplus {L'}^{\vee} \xrightarrow{\bmatrix \alpha_1^{\vee} & * \\ \alpha_2^{\vee}& * \endbmatrix}
 [(V\otimes \Sigma_1^{-1}) \oplus  (W\otimes \Sigma_2^{-1})]^{\vee}$$
which is also a surjection on the level of all global  sections.

Define $E$ as the dual of the kernel of this map. We get the sequence
$$ 0 \to (V\otimes \Sigma_1^{-1}) \oplus  (W\otimes \Sigma_2^{-1}) \xrightarrow{\bmatrix \alpha_1 &  \alpha_2 \\ * & * \endbmatrix}  F \oplus L' \to E \to 0.$$

By our construction and Serre duality, the map $\bmatrix \alpha_1 &  \alpha_2 \\ * & * \endbmatrix$ induces an inclusion on
the level $H^2$, hence $H^1_*(F) \cong H^1_*(E)$.

Next, consider the  sequence $0 \to F \oplus L' \xrightarrow{\bmatrix * &  0 \\ 0 & I \endbmatrix} L_1 \oplus L' \xrightarrow
{[\phi,0]} L_0 \to 0$. We get the composite inclusion $(V\otimes \Sigma_1^{-1}) \oplus  (W\otimes \Sigma_2^{-1}) \to F \oplus
L'  \to L_1 \oplus L'$. This inclusion has a dual  map $[L_ 1 \oplus L']^{\vee} \to [(V\otimes \Sigma_1^{-1}) \oplus  (W\otimes
\Sigma_2^{-1})]^{\vee}$ which is surjective on global sections. Since a minimal surjection to $\Sigma_i$ is given in the
canonical sequences
(\ref{spinor}), it follows that the kernel can be identified with $(V^{\vee} \otimes \Sigma_1^{-1}) \oplus  (W^{\vee}\otimes
\Sigma_2^{-1}) \oplus P^{\vee}$, where $P$ is a free summand of $L_ 1 \oplus L'$.   This results in the commuting diagram

$$\begin{array}{ccccccc}
 &0 & & 0& & & \\
  &\downarrow & & \downarrow& & &\\
 & (V\otimes \Sigma_1^{-1}) \oplus  (W\otimes \Sigma_2^{-1})               & = & (V\otimes \Sigma_1^{-1}) \oplus  (W\otimes \Sigma_2^{-1})  &    &  & \\
 &\downarrow & &\downarrow   & & & \\
0 \to &F\oplus L'&\xrightarrow{} & L_1\oplus L' &\rightarrow & L_0& \to 0\\
 &\downarrow & & \downarrow & &\| &\\
0 \to  &E& \xrightarrow{} & (V \otimes \Sigma_1) \oplus  (W\otimes \Sigma_2) \oplus P&\rightarrow &L_0 &\to 0 \\
 &\downarrow & &\downarrow & & & \\

 & 0&          & 0& & & \\
 \end{array}$$
The lowest row can be compared with the sequence $0 \to E \to A_E \to L_0 \to 0$. In both sequences,
$H^0_*(L_0)$ is mapped surjectively and minimally to $H^1_*(E)$, hence by Lemma \ref{liftings}(2), the two sequences are
isomorphic.
Therefore $\mu,\nu$ coincide with $\mu_E, \nu_E$ and the  diagram above is a diagram of $E$.

\end{proof}

\end{section}

\begin{section}{Examples}

\begin{example}
\end{example}
Let  $M=k$ in degree $0$. Then, denoting $\Omega^i_{\mathbf P^3}$ as $\Omega^i$,  the vector bundle $F$ associated to $M$ is
$F=\Omega^1_{|\cal Q}$. For $i=1,2$, the auxiliary modules  $M_{\Sigma_i}$ are both equal to $k^{\oplus 2}$, concentrated
in degree $0$. Hence any element of
$M_{\Sigma_i}$ is a $\Sigma_j$-socle element ($j\neq i$). There are several choices for $V,W$, each giving a bundle $E$
without ACM summands.

\begin{enumerate}
    \item Minimally, when $V=W=0$, $E=F=\Omega^1_{|\cal Q}$.
    \item In the maximal case, when $V=k^{\oplus 2}$ in degree $-1$ and $W=k^{\oplus 2}$ in degree $-1$ ($\nu=\nu_{-1}=2$ and
    $\mu=\mu_{-1}=2$), the bundle $E$ is $\Omega^2_{|\cal Q}(2)$. The diagram for $E$ in this case requires additional line
    bundles to supplement $F$:
$$\begin{array}{ccccccc}
 &0 & &0& & & \\
  &\downarrow & &\downarrow & & &\\
 & 2\Sigma_1(-2)\oplus 2\Sigma_2(-2) &=&2\Sigma_1(-2)\oplus 2\Sigma_2(-2)    &    &  & \\
 &\downarrow & &\downarrow & & & \\
0 \to &\Omega^1_{|\cal Q}\oplus 4\sO(-1)&\rightarrow & 8\sO(-1) &\rightarrow & \sO& \to 0\\
 &\downarrow & &\downarrow & &\| &\\
 0 \to &\Omega^2_{|\cal Q}(2)& \rightarrow & 2\Sigma_2(-1)\oplus 2\Sigma_1(-1) &\rightarrow & \sO&  \to 0\\
 &\downarrow & &\downarrow & & & \\

 & 0&          &0& & & \\
 \end{array}.$$
    \item When $W=k^{\oplus 2}$ in degree $-1$ and $V=0$ ($\nu=\nu_{-1}=2$ and $\mu=0$), the vector bundle $E$ is
    $\Sigma_1^{-2}= \sO(-2,0)$, the kernel of $2\Sigma_2(-1)\xrightarrow{}\sO$.

    \item In the next three cases, the bundle depends on parameters. When $W=k$ in degree $-1$ and $V=0$
    ($\nu=\nu_{-1}=1$ and $\mu=0$), there is a one parameter family of bundles corresponding to the choice of the
    subspace $W$ of $k^{\oplus 2}$. In fact, $\Hom(\Sigma_1(-2),\Omega^1_{|\cal Q})$ is two dimensional, and each non-zero
    element gives an inclusion of bundles whose cokernel is a rank two bundle $E$ without ACM summands (giving an
    $\eta$-sequence).  An example of such a bundle $E$ is the rank two bundle given as the kernel of
    $\Sigma_2(-1)\oplus 2\sO(-1) \xrightarrow{[s,x_2,x_3]}\sO$ (giving a $\gamma$-sequence).
      \item When $W=k^{\oplus 2}$ in degree $-1$ and $V=k$  in degree $-1$ ($\nu=\nu_{-1}=2$ and
    $\mu=\mu_{-1}=1$), an example of a vector
    bundle $E$ without ACM summands corresponding to such a choice is the rank two bundle given as the kernel of
    $ 2\Sigma_2(-1)\oplus \Sigma_1(-1) \xrightarrow{[s,t,u]} \sO$. The bundles in this one-parameter family can be
    also viewed as extensions of the type $0 \to \sO(-2,0)  \to E \to \Sigma_1(-1) \to 0$.
    The diagram is
    $$\begin{array}{ccccccc}
 &0 & &0& & & \\
  &\downarrow & &\downarrow & & &\\
 & 2\Sigma_1(-2)\oplus \Sigma_2(-2) &=&2\Sigma_1(-2)\oplus \Sigma_2(-2)    &    &  & \\
 &\downarrow & &\downarrow & & & \\
0 \to &\Omega^1_{|\cal Q}\oplus 2\sO(-1)&\rightarrow & 6\sO(-1) &\rightarrow & \sO& \to 0\\
 &\downarrow & &\downarrow & &\| &\\
 0 \to &E& \rightarrow & 2\Sigma_2(-1)\oplus \Sigma_1(-1) &\rightarrow & \sO&  \to 0\\
 &\downarrow & &\downarrow & & & \\

 & 0&          &0& & & \\
 \end{array}.$$
    \item When $W=k$ and $V=k$  in degree $-1$ ($\nu=\nu_{-1}=1$ and
    $\mu=\mu_{-1}=1$), the bundles are parametrized by $\pee1 \times \pee1$. Such a bundle $E$ has rank two and
    has diagram

$$\begin{array}{ccccccc}
 &0 & &0& & & \\
  &\downarrow & &\downarrow & & &\\
 & \Sigma_1(-2)\oplus \Sigma_2(-2) &=&\Sigma_1(-2)\oplus \Sigma_2(-2)    &    &  & \\
 &\downarrow & &\downarrow & & & \\
0 \to &\Omega^1_{|\cal Q}\oplus \sO(-1)&\rightarrow & 5\sO(-1) &\rightarrow & \sO& \to 0\\
 &\downarrow & &\downarrow & &\| &\\
 0 \to &E& \rightarrow & \Sigma_2(-1)\oplus \Sigma_1(-1)\oplus \sO(-1) &\rightarrow & \sO&  \to 0\\
 &\downarrow & &\downarrow & & & \\

 & 0&          &0& & & \\
 \end{array}.$$
 $E(1)$ has a unique section whose zero-scheme is a point on $\cal Q$.

\end{enumerate}

\begin{example}
\end{example}
 Let $M=k^{\oplus 2}$ in degree $-1$, so $F= 2\Omega^1_{|\cal Q}(1)$ and  $M_{\Sigma_i}\cong k^{\oplus 4}$ in degree $-1$
 for $i=1,2$.
 Let us consider  $V=k^{\oplus 2}$ in degree $0$ and $W=k^{\oplus 2}$ in degree $0$ ($\nu=\nu_{0}=2$ and $\mu=\mu_{0}=2$).
The diagram for a rank two bundle $E$ written below that is associated to these invariants certainly exists for $E$ equal to
$[\sO(-2,0)\oplus \sO(0,-2)](1)$ as the direct sum of diagrams for  $\sO(-2,0)(1)$ in part (2) of the example above and the
corresponding one for $\sO(0,-2)(1)$:
$$\begin{array}{ccccccc}
&0 & &0& & & \\
&\downarrow & &\downarrow & & &\\
& 2\Sigma_1(-1)\oplus 2\Sigma_2(-1)               &=&2\Sigma_1(-1)\oplus 2\Sigma_2(-1)   &    &  & \\
&\downarrow & &\downarrow & & & \\
0 \to &2\Omega^1_{|\cal Q}(1)&\rightarrow & 8\sO &\rightarrow & 2\sO(1)& \to 0\\
&\downarrow & &\downarrow & &\| &\\
0 \to &E& \rightarrow & 2\Sigma_2\oplus 2\Sigma_1 &\rightarrow & 2\sO(1)&  \to 0\\
&\downarrow & &\downarrow & & & \\

& 0&          &0& & & \\
\end{array}$$

Let $N$ be a null correlation bundle on $\pee3$. Then $N$ is a stable
bundle of rank 2 on $\pee3$ with $c_1=0,$ $c_2=1$, and is obtained from a sequence
$$0\rightarrow \OP3(-1)\xrightarrow{s} \Omega^1(1) \rightarrow N\rightarrow 0.$$
Let $E=N|_{\cal Q}$. Then $H^0(E)=0$, $c_1=0, c_2=2$. Any rank two bundle $E$ on $\Q$ with the properties $H^0(E)=0, c_1=0,
c_2=2$ satisfies $M(E)= k^{\oplus 2}$ in degree $-1$, using Riemann-Roch. Then the $\gamma$-sequence $0 \to E \to A_E \to
2\sO(1) \to 0$ is easily analyzed to show that $A_E= 2\Sigma_2\oplus 2\Sigma_1$. So $E$
has the indicated diagram.

Any rank two bundle $E$ on $\Q$ with $H^0(E)=0,$ $c_1=0,$ $c_2=2$ is Mumford-Takemoto stable (\cite{Ma}). Le Potier
\cite{LP}
has a stronger definition of stability for such a bundle on $\Q$. Suppose $H^0(E)=0,$ $c_1=0,$ $c_2=2$. If
$H^0(E(a,b))\neq 0$ with $b < 0$, and if $E(a-1,b)$, $E(a,b-1)$ have no
sections, then $a\geq 1$ and a section of $H^0(E(a,b))$ determines a subscheme $Z$ with finite support (or empty) with
$\text{degree}(Z) =2+2ab \geq 0$. Hence if $H^0(E(a,b))\neq 0$ and either $b<0$ or $a<0$, then $E(a,b)=E(1,-1)$ or $E(-1,1)$.
Le Potier stability for a rank two bundle $E$ with $c_1=0,$ $c_2=2$ requires that $H^0(E)=0, H^0(E(-1,1))= 0, H^0(E(1,-1))= 0$.
This coincides with the notion of a stable projectable bundle due to Soberon-Chavez \cite{So}.

Using the invariants $V=k^{\oplus 2}\subset H^1(2\Omega^1_{|\cal Q}(1)\otimes \Sigma_2)$ and
$W=k^{\oplus 2} \subset H^1(2\Omega^1_{|\cal Q}(1)\otimes \Sigma_1)$, we can distinguish three types of bundles $E$
on $\Q$ with the above diagram: split,  non-split non-le Potier stable
and le Potier stable. For convenience, in $F= 2\Omega^1_{|\cal Q}(1) = k^{\oplus 2} \otimes_k \Omega^1_{|\cal Q}(1)$, let
$\{e_1,e_2\}$ be a basis for $k^{\oplus 2}$ and let $e_1\Omega^1_{|\cal Q}(1)$ and
$e_2\Omega^1_{|\cal Q}(1)$ denote the corresponding summands.\\
\begin{enumerate}
    \item We have already seen that if $W = H^1(e_1\Omega^1_{|\cal Q}(1)\otimes \Sigma_1)$ and $V = H^1(e_2\Omega^1_{|\cal Q}(1)
    \otimes    \Sigma_2)$,    then $E$ is split as $[\sO(-2,0)\oplus \sO(0,2)](1)$.
    \item If $W = H^1(e_1\Omega^1_{|\cal Q}(1)\otimes \Sigma_1)$ (without conditions on $V$), we get an inclusion of left columns
    of the diagrams for $\sO(-2,0)(1)$ and $E$, yielding a short sequence $$0 \to \sO(-1,1)\to E \to \sO(1,-1)\to 0.$$
    $E$ is not le Potier stable  since $H^0(E(1,-1))\neq 0$. \\
    Conversely, if $E$ is not le Potier stable, and also satisfies our diagram, it must be that
    $h^0(E(1,-1))\neq 0$ or $h^0(E(-1,1))\neq 0$. By a Chern class computation, we get, in the
    first case, an exact sequence  $0 \to \sO(-1,1)\to E \to \sO(1,-1)\to 0$. The left columns of the diagrams for
    $\sO(-2,0)(1)$ and $E$ can be completed to a commuting diagram since $\Ext^1(\Omega^1_{|\cal Q}(1), \Sigma_i(-1))=0$.
    It follows that for some basis $\{e_1,e_2\}$, $E$ is obtained with $W =
    H^1(e_1\Omega^1_{|\cal Q}(1)\otimes \Sigma_1)$.
    \item For general choices of $V,W$, the $E$ we obtain is a le Potier stable bundle on  $\Q$, giving a point in the
    moduli space $M_{\cal Q}^{lp}(0,2)$ of le Potier stable bundles.
\end{enumerate}

Le Potier \cite{LP} analyzes the restriction to the quadric $\Q$ of null correlation bundles $N$. Let $M_{\mathbf
P^3}^0(0,1)$ be the open subset of $M_{\mathbf P^3}(0,1)$ consisting of all bundles $N$ such that $N|_{\cal Q}$ is le Potier
stable on $\Q$. He shows that restriction gives an \`etale quasi-finite morphism from $M_{\mathbf P^3}^0(0,1)$ onto an
open proper subset $U \subset M_{\cal Q}^{lp}(0,2)$. The generic bundle $E$ of $U$ has a  twin pair (a Tjurin
pair) of null correlation bundles restricting to it, while there are bundles $E$ in $U$ with a unique null correlation bundle
restricting to it. Soberon-Chavez \cite{So} compactifies $M_{\cal Q}^{lp}(0,2)$ using only the non-split non-le Potier
stable bundles described above.

A complete discussion  of the diagram in this example, which we have not carried out, would describe how
the choices of $V,W$ determine whether the bundle $E$ obtained is in the subset $U$ of $M_{\cal Q}^{lp}(0,2)$, and if so,
if $E$ has a Tjurin pair over it.  We
have also not discussed strictly semi-stable bundles with $c_1=0$, $c_2=2$ on $\Q$ with $H^0(E) \neq 0$ since such bundles
$E$ have a different $M(E)$, hence a different diagram. However, we give an example below of a bundle not in $U$.

If $E$ is a bundle with the diagram above, then the $\gamma$-sequence gives rise to a homomorphism $g=(g_1,g_2):
2\Sigma_2\oplus 2\Sigma_1 \rightarrow  2\sO(1)$ and $E$ is le Potier stable if and only if both $\det g_1$, $\det g_2$
(quadratics in $s,t$, respectively $u,v$) are non-zero. It is easy to calculate that for such an $E$, the jumping lines
contained in the two pencils on $\Q$ are obtained from the zeroes of $\det g_1$ and $\det g_2$ . The restriction of a
null-correlation bundle $N$ to $\Q$ has a $\gamma$-sequence which can be calculated. When $N|_{\cal Q}$ is le Potier stable,
we can verify le Potier's observation about the jumping lines of $N|_{\cal Q}$ and see that it is not possible for both $\det
g_1$ and $\det g_2$ to have repeated roots. Now consider the map
$$g = \bmatrix s & t & u & v \\ -t & s-2t & v &0 \endbmatrix.$$
We can check that $g$ is a surjection of bundles on $\Q$, giving a rank two bundle $E$, and that
$H^0(E)=H^0(E(1,-1))=H^0(E(-1,1))=0$. Hence this $E$ is le Potier stable but not in the image $U$ of restrictions of
null-correlation bundles.

\end{section}

\end{document}